\newtheorem{thm}{Theorem}[section]
\numberwithin{equation}{section}
\newtheorem{rmk}{Remark}[section]
\newtheorem{prop}{Proposition}[section]
\newtheorem{cor}{Corollary}[section]
\def\@setauthors{%
	\begingroup
	\def\thanks{\protect\thanks@warning}%
	\trivlist
	\centering\footnotesize \@topsep30\p@\relax
	\advance\@topsep by -\baselineskip
	\item\relax
	\author@andify\authors
	\def\\{\protect\linebreak}%
	\authors%
	\ifx\@empty\contribs
	\else
	,\penalty-3 \space \@setcontribs
	\@closetoccontribs
	\fi
	\endtrivlist
	\endgroup
}
\date{}
\begin{document}
	
	\title[Inverse initial problem, Nash strategy, Stochastic parabolic equations]{Inverse initial problem under Nash strategy for stochastic reaction-diffusion equations with dynamic boundary conditions}
	
	
	\author[A. Elgrou]{\large Abdellatif Elgrou$^1$}
	\address{$^1$Cadi Ayyad University, Faculty of Sciences Semlalia, LMDP, UMMISCO (IRD-UPMC), P.B. 2390, Marrakesh, Morocco.}
	\email{abdoelgrou@gmail.com}

	\author[L. Maniar]{\large Lahcen Maniar$^{1,2}$}
	\address{$^2$University Mohammed VI  Polytechnic, Vanguard Center, Benguerir, Morocco.}
	\email{maniar@uca.ma}
	\email{Lahcen.Maniar@um6p.ma}

	\author[O. Oukdach]{\large Omar Oukdach$^3$}
	\address{$^3$Moulay Ismaïl University of Meknes, FST Errachidia,  MSISI Laboratory, AM2CSI Group, BP 50, Boutalamine, Errachidia, Morocco.}
	\email{omar.oukdach@gmail.com}

	\begin{abstract}
		In this paper, we study a multi-objective inverse initial problem with a Nash strategy constraint for forward stochastic reaction-diffusion equations with dynamic boundary conditions, where both the volume and surface equations are influenced by randomness. The objective is twofold: first, we maintain the state close to prescribed targets in fixed regions using two controls; second, we determine the history of the solution from observations at the final time. To achieve this, we establish new Carleman estimates for forward and backward equations, which are used to prove an interpolation inequality for a coupled forward-backward stochastic system. Consequently, we obtain two results: backward uniqueness and a conditional stability estimate for the initial conditions.
	\end{abstract}
	\maketitle
	\smallskip
	\textbf{Keywords:}{ Inverse initial problem, reaction-diffusion equations, Nash equilibrium, Carleman estimates, volume-surface, dynamic boundary conditions.}

	\maketitle

	\section{Introduction}
	Roughly speaking, an inverse problem for partial differential equations (PDEs, for short) is the problem of determining some of the coefficients (such as potentials, initial conditions, boundary conditions, or source terms) involved in the equation based on observations of the solution. Typically, an inverse problem arises when a quantity cannot be measured directly but can only be inferred from the observation of its effects. 
	
	Compared to the extensive literature devoted to the study of inverse problems for deterministic PDEs, stochastic equations are much less studied. Numerous methods and techniques have been developed to address these problems. Among these, Carleman estimates are some of the most useful and efficient tools. For some results on different inverse problems, we refer to \cite{Dinvpb2, DIP1, Dinvpb1, DIP2, DIP3, Yamam2009invePrb} for the deterministic setting and to \cite{StIP3, StIP2} for some stochastic equations, along with the survey article \cite{StIP1} for a detailed presentation on the subject.
	
	Among important inverse problems, the inverse initial value problem is of particular interest. For example, as explained in \cite{DIP4}, it is practically useful for determining past temperatures based on the current temperature distribution, which is relevant to fields such as thermo-archaeology and developing policies for global warming by estimating past temperatures. See, for instance, \cite{DIP1, DIP4} and references therein.
	
	Unlike classical inverse problems, the present study addresses a multi-objective inverse problem for stochastic parabolic equations with dynamic boundary conditions. More precisely, we combine some techniques  of classical inverse problems with concepts from game theory. The resolution process can be summarized in   two steps. The first step, achieved by applying two localized controls, consists of maintaining the state close to prescribed targets in fixed regions. The second step aims  to determine the history of the solution process from observations made at the final time. As we will demonstrate, after the first step, the multi-objective inverse problem is reduced to a classical inverse problem, which will be addressed by proving an interpolation inequality for a coupled forward-backward stochastic system. The proofs rely on some new Carleman estimates for a class of forward and backward stochastic parabolic equations with dynamic boundary conditions. To the authors' best knowledge, the present paper is the first to address inverse problems in the context of stochastic parabolic equations with dynamic boundary conditions. We note that this type of boundary conditions has been considered in several papers. We refer to \cite{ACMO20, Dinvpb2, Dinvpb1} for some results concerning inverse problems, and \cite{OuBoMa21, BCMO20, BoMaOuNash, maniar2017null} for some control issues in the deterministic case. For stochastic equations with dynamic boundary conditions, see \cite{elgrouDBC, BackSPEwithDBC}. We also refer to the works \cite{BEMOstoch, stackNashnullcont24}, where we have considered some multi-objective optimal control problems for certain stochastic parabolic equations. 
	
	Multi-objective inverse problems have many practical applications. For example, if 
	$y$ represents the concentration of chemicals, we aim to keep both the concentration 
	$y$ and its rate of change close to their desired values by adding water to   the chemical or evaporating it. At the same time, the initial concentration is expected to be determined from observations made at the terminal time. Despite their importance, this type of problem is rarely studied in the literature. We refer to \cite{Milti-IP4}, where the authors considered degenerate stochastic parabolic equations. As far as we know, this cited article is the only one to deal with multi-objective inverse problems.\\
	
	The plan for the rest of the paper is as follows: In the next section, we formulate the problem under consideration and present the main results of this paper. In Section \ref{secc3}, we characterize the Nash equilibrium. In Section \ref{sec3}, we establish two necessary Carleman estimates for forward and backward stochastic parabolic equations with dynamic boundary conditions, respectively. Section \ref{sec4} is devoted to proving our main results.
	\section{Problem Formulation and the Main Results}
	Let us introduce some necessary notations. Let \( T > 0 \) and \( G \subset \mathbb{R}^N \) (with \( N \geq 2 \)) be a nonempty open bounded domain with a smooth boundary \( \Gamma = \partial G \). Let the sets \( G_1 \), \( G_2 \), \( G_{1,d} \), and \( G_{2,d} \) be any nonempty open subsets of \( G \). We also denote by \( \chi_{\mathcal{O}} \) the characteristic function of a subset \( \mathcal{O} \subset G \). Define
	\[
	Q = (0, T) \times G \quad \textnormal{and} \quad \Sigma = (0, T) \times \Gamma.
	\]
	
	Let \((\Omega, \mathcal{F}, \{\mathcal{F}_t\}_{t \in [0,T]}, \mathbb{P})\) be a fixed complete filtered probability space on which a one-dimensional standard Brownian motion \(W(\cdot)\) is defined, such that \(\{\mathcal{F}_t\}_{t \in [0,T]}\) is the natural filtration generated by \(W(\cdot)\) and augmented by all the \(\mathbb{P}\)-null sets in \(\mathcal{F}\). For a Banach space \(\mathcal{X}\), we denote by \(C([0,T]; \mathcal{X})\) the Banach space of all \(\mathcal{X}\)-valued continuous functions defined on \([0,T]\); and \(L^2_{\mathcal{F}_t}(\Omega; \mathcal{X})\) denotes the Banach space of all \(\mathcal{X}\)-valued \(\mathcal{F}_t\)-measurable random variables \(X\) such that \(\mathbb{E}\big(\vert X \vert_\mathcal{X}^2\big) < \infty\), with the canonical norm. Additionally, \(L^2_\mathcal{F}(0,T; \mathcal{X})\) indicates the Banach space consisting of all \(\mathcal{X}\)-valued \(\{\mathcal{F}_t\}_{t \in [0,T]}\)-adapted processes \(X(\cdot)\) such that \(\mathbb{E}\big(\vert X(\cdot) \vert^2_{L^2(0,T; \mathcal{X})}\big) < \infty\), with the canonical norm; and \(L^\infty_\mathcal{F}(0,T; \mathcal{X})\) is the Banach space consisting of all \(\mathcal{X}\)-valued \(\{\mathcal{F}_t\}_{t \in [0,T]}\)-adapted bounded processes, with the essential supremum norm denoted by \(|\cdot|_\infty\). Finally, \(L^2_\mathcal{F}(\Omega; C([0,T]; \mathcal{X}))\) defines the Banach space consisting of all \(\mathcal{X}\)-valued \(\{\mathcal{F}_t\}_{t \in [0,T]}\)-adapted continuous processes \(X(\cdot)\) such that \(\mathbb{E}\big(\max_{0\leq t\leq T}\vert X(t) \vert^2_{\mathcal{X}}\big) < \infty\), with the canonical norm. We now introduce the following space
	\[
	\mathbb{L}^2 := L^2(G, \, dx) \times L^2(\Gamma, \, d\sigma),
	\]
	where \(dx\) denotes the Lebesgue measure in \(G\) and \(d\sigma\) indicates the surface measure on \(\Gamma\). Equipped with the following standard inner product
	\[
	\langle (y, y_\Gamma), (z, z_\Gamma) \rangle_{\mathbb{L}^2} = \langle y, z \rangle_{L^2(G)} + \langle y_\Gamma, z_\Gamma \rangle_{L^2(\Gamma)},
	\]
	the space \(\mathbb{L}^2\) is a Hilbert space. Moreover, we adopt the following notations:
	\[
	\mathcal{V}_i = L^2_\mathcal{F}(0,T; L^2(G_i)), \quad \mathcal{V}_{i,d} = L^2_\mathcal{F}(0,T; L^2(G_{i,d})) \qquad \textnormal{for} \;\; i=1,2.
	\]
	Consider the following stochastic reaction-diffusion system
	\begin{equation}\label{eqq1.1}
		\begin{cases}
			\begin{array}{ll}
				dy - \Delta y \, dt = [a_1 y + \chi_{G_1}(x) v_1 + \chi_{G_2}(x) v_2] \, dt + a_2 y \, dW(t) & \textnormal{in } Q,\\
				dy_\Gamma - \Delta_\Gamma y_\Gamma \, dt + \partial_\nu y \, dt = b_1 y_\Gamma \, dt + b_2 y_\Gamma \, dW(t) & \textnormal{on } \Sigma,\\
				y_\Gamma = y \vert_\Gamma & \textnormal{on } \Sigma,\\
				(y, y_\Gamma) \vert_{t=0} = (y_0, y_{\Gamma,0}) & \textnormal{in } G \times \Gamma,
			\end{array}
		\end{cases}
	\end{equation}
	where \((y_0, y_{\Gamma,0}) \in L^2_{\mathcal{F}_0}(\Omega; \mathbb{L}^2)\) is the initial state, \((y, y_\Gamma)\) is the state variable, and \((v_1, v_2) \in \mathcal{V}_1 \times \mathcal{V}_2\) is a pair of controls. The coefficients \(a_1\), \(a_2\), \(b_1\) and \(b_2\)  are assumed to be
	\[
	a_1 \in L_\mathcal{F}^\infty(0,T; L^\infty(G)), \quad a_2 \in L_\mathcal{F}^\infty(0,T; W^{1,\infty}(G)),
	\]
	\[
	b_1 \in L_\mathcal{F}^\infty(0,T; L^\infty(\Gamma)), \quad b_2 \in L_\mathcal{F}^\infty(0,T; W^{1,\infty}(\Gamma)).
	\] 
	Throughout this paper, \(C\) denotes a positive constant depending only on \(G\), \(G_{1}\), \(G_{2}\), \(T\), \(a_1\), \(a_2\), \(b_1\), and \(b_2\), which may change from one place to another. In equation \eqref{eqq1.1}, \(y \vert_\Gamma\) denotes the trace of the function \(y\), and \(\partial_\nu y = (\nabla y \cdot \nu) \vert_{\Sigma}\) is the normal derivative of \(y\), where \(\nu\) is the outer unit normal vector at the boundary \(\Gamma\). This normal derivative plays the role of the coupling term between the volume and surface equations. For more details and the physical interpretation of dynamical boundary conditions, also called Wentzell boundary conditions, we refer to \cite{cocang2008, Gal15, Gol06}. System \eqref{eqq1.1} (with \(v_1 \equiv v_2 \equiv 0\)) describes various diffusion phenomena, such as thermal processes. These systems, subject to stochastic disturbances, also account for small independent changes during the heat process. Here, we study systems with stochastic dynamic boundary conditions that take into account the dynamic interaction between the domain and the boundary. For further details on the physical model described by such systems, see, for instance, \cite{Pbrrune, chuesBj}.

	Now, let us recall some useful differential operators defined on \(\Gamma\). Firstly, the tangential gradient of a function \(y_\Gamma: \Gamma \rightarrow \mathbb{R}\) is defined by
	\[
	\nabla_{\Gamma} y_\Gamma = \nabla y - \partial_{\nu} y \, \nu,
	\]
	where \(y\) is an extension of \(y_\Gamma\) up to an open neighborhood of \(\Gamma\). Secondly, the tangential divergence of a function \(y_\Gamma: \Gamma \rightarrow \mathbb{R}^N\), such that \(y = y_\Gamma\) on \(\Gamma\), is defined as
	\[
	\operatorname{div}_{\Gamma} (y_\Gamma) = \operatorname{div}(y) - \nabla y \,\nu  \cdot \nu,
	\]
	where \(\nabla y\) is the standard Jacobian matrix of \(y\).
	
	Then, we define the Laplace-Beltrami operator of a function \(y_\Gamma: \Gamma \rightarrow \mathbb{R}\) as follows:
	\[
	\Delta_{\Gamma} y_\Gamma = \operatorname{div}_{\Gamma} (\nabla_{\Gamma} y_\Gamma).
	\]
	
	As in the case of the standard Sobolev spaces \(H^1(G)\) and \(H^2(G)\),  Sobolev spaces   \(H^1(\Gamma)\) and \(H^2(\Gamma)\) are defined as follows:
	\[
	H^1(\Gamma) = \left\{ y_\Gamma \in L^2(\Gamma) \mid \nabla_\Gamma y_\Gamma \in L^2(\Gamma; \mathbb{R}^N) \right\},
	\]
	and
	\[
	H^2(\Gamma) = \left\{ y_\Gamma \in L^2(\Gamma) \mid \nabla_\Gamma y_\Gamma \in H^1(\Gamma; \mathbb{R}^N) \right\},
	\]
	endowed with the following norms respectively:
	\[
	|y_\Gamma|_{H^1(\Gamma)} = \langle y_\Gamma, y_\Gamma \rangle_{H^1(\Gamma)}^{\frac{1}{2}}, \quad \textnormal{with} \quad \langle y_\Gamma, z_\Gamma \rangle_{H^1(\Gamma)} = \int_{\Gamma} y_\Gamma z_\Gamma \, d\sigma + \int_{\Gamma} \nabla_{\Gamma} y_\Gamma \cdot \nabla_{\Gamma} z_\Gamma \, d\sigma,
	\]
	and
	\[
	|y_\Gamma|_{H^2(\Gamma)} = \langle y_\Gamma, y_\Gamma \rangle_{H^2(\Gamma)}^{\frac{1}{2}}, \quad \textnormal{with} \quad \langle y_\Gamma, z_\Gamma \rangle_{H^2(\Gamma)} = \int_{\Gamma} y_\Gamma z_\Gamma \, d\sigma + \int_{\Gamma} \Delta_{\Gamma} y_\Gamma \Delta_{\Gamma} z_\Gamma \, d\sigma.
	\]
	In the subsequent sections, for the Laplace-Beltrami operator \(\Delta_\Gamma\), we commonly use the following surface divergence formula
	\[
	\int_\Gamma \Delta_\Gamma y_\Gamma\, z_\Gamma \, d\sigma = -\int_\Gamma \nabla_\Gamma y_\Gamma \cdot \nabla_\Gamma z_\Gamma \, d\sigma, \qquad y_\Gamma \in H^2(\Gamma), \; z_\Gamma \in H^1(\Gamma).
	\]
	We also define the following Hilbert spaces
	\[
	\mathbb{H}^k = \left\{ (y, y_\Gamma) \in H^k(G) \times H^k(\Gamma) \mid y_\Gamma = y \vert_\Gamma \right\}, \quad k = 1,2,
	\]
	which can be viewed as a subspace of \(H^k(G) \times H^k(\Gamma)\) with the natural topology inherited from \(H^k(G) \times H^k(\Gamma)\).
	
	From \cite{elgrouDBC}, the equation \eqref{eqq1.1} is well-posed, i.e., for any \((y_0, y_{\Gamma,0}) \in L^2_{\mathcal{F}_0}(\Omega; \mathbb{L}^2)\) and \((v_1, v_2) \in \mathcal{V}_1 \times \mathcal{V}_2\), \eqref{eqq1.1} admits a unique mild solution
	\[
	(y, y_\Gamma) \in L^2_\mathcal{F}(\Omega; C([0, T]; \mathbb{L}^2)) \cap L^2_\mathcal{F}(0, T; \mathbb{H}^1).
	\]
	Moreover, there exists a constant \(C > 0\) such that
	\begin{align}\label{forwenergyes}
		\begin{aligned}
			& \, \vert (y, y_\Gamma) \vert_{L^2_\mathcal{F}(\Omega; C([0, T]; \mathbb{L}^2))} + \vert (y, y_\Gamma) \vert_{L^2_\mathcal{F}(0, T; \mathbb{H}^1)} \\
			& \leq C \, \left( |(y_0, y_{\Gamma,0})|_{L^2_{\mathcal{F}_0}(\Omega; \mathbb{L}^2)} + |v_1|_{L^2_\mathcal{F}(0, T; L^2(G_1))} + |v_2|_{L^2_\mathcal{F}(0, T; L^2(G_2))} \right).
		\end{aligned}
	\end{align}
	
	We now provide some remarks.
	\begin{rmk}
		\begin{enumerate}[1.]
			\item In system \eqref{eqq1.1}, the coefficients \(a_1\), \(a_2\), \(b_1\), and \(b_2\) represent the reaction terms, while the diffusion is governed by \(\Delta\) in the volume equation and by \(\Delta_\Gamma\) in the surface equation.
			\item In this work, it is also possible to consider the following more general diffusion terms instead of \(\Delta\) (resp., \,\(\Delta_\Gamma\)):
			\[
			\textnormal{div}(A\nabla\cdot) \;\;\textnormal{(resp., } \,\textnormal{div}_\Gamma(A_\Gamma\nabla_\Gamma\cdot)),
			\]
			where \(A \in L^2_\mathcal{F}(\Omega;C^1([0,T];W^{2,\infty}(\overline{G};\mathbb{R}^{N\times N})))\) and \(A_\Gamma \in L^2_\mathcal{F}(\Omega;C^1([0,T];W^{2,\infty}(\Gamma;\mathbb{R}^{N\times N})))\) are symmetric and uniformly elliptic, i.e., there exists a constant \(a_0 > 0\) such that
			\begin{align*}
				& A(\omega,t,x)\xi\cdot\xi \geq a_0 |\xi|^2 \quad \textnormal{for all}\;\;(\omega,t,x,\xi)\in\Omega\times \overline{Q}\times\mathbb{R}^N, \\
				& A_\Gamma(\omega,t,x)\xi\cdot\xi \geq a_0 |\xi|^2 \quad \textnormal{for all}\;\;(\omega,t,x,\xi)\in\Omega\times \Sigma\times\mathbb{R}^N.
			\end{align*}
			See \cite{StIP2} for the case of Dirichlet boundary conditions.
			\item Due to the weighted identity method employed in the proof of the Carleman estimate for forward stochastic parabolic equations in Section \ref{sec3}, we require in equation \eqref{eqq1.1} that the coefficients \(a_2\) and \(b_2\) are in \(W^{1,\infty}\) with respect to the space variable. Thus, it is of interest to explore whether we can relax this assumption to \(L^\infty\) in space.
			\item In this study, it would be interesting to incorporate gradient terms \(\nabla y\) in the volume equation and \(\nabla_\Gamma y_\Gamma\) in the surface equation of the system \eqref{eqq1.1}. This extension will allow us to investigate a stochastic reaction-convection-diffusion system with dynamic boundary conditions, which will enhance the understanding of more complex physical phenomena.
		\end{enumerate}
	\end{rmk}
	
	The main purpose of this paper is to study a multi-objective inverse initial problem for equation \eqref{eqq1.1}. To formulate the problem under consideration, for two fixed target functions \(y_{i,d} \in \mathcal{V}_{i,d}\) (\(i=1,2\)), we introduce the following functionals
	\begin{equation}\label{Ji.12}
		J_i(v_1,v_2) = \frac{\alpha_i}{2} \mathbb{E} \int_0^T\int_{G_{i,d}} |y - y_{i,d}|^2 \, dx \, dt + \frac{\beta_i}{2} \mathbb{E} \int_0^T\int_{G_{i}} |v_i|^2 \, dx \, dt, \quad i=1,2,
	\end{equation}
	where \(\alpha_i\), \(\beta_i\) are positive constants and \((y,y_{\Gamma})\) is the solution of \eqref{eqq1.1} associated with the initial condition \((y_0, y_{\Gamma,0}) \in L^2_{\mathcal{F}_0}(\Omega; \mathbb{L}^2)\) and controls \((v_1,v_2) \in \mathcal{V}_1 \times \mathcal{V}_2\).
	
	We first find a Nash equilibrium \((v^{\star}_1, v^{\star}_2) \in \mathcal{V}_1 \times \mathcal{V}_2\) for \(J_1\) and \(J_2\), which is defined in the following sense: \((v^{\star}_1, v^{\star}_2) \in \mathcal{V}_1 \times \mathcal{V}_2\) is a Nash equilibrium for \(J_1\) and \(J_2\) if and only if 
	\begin{equation}\label{nshequi}
		\left\{
		\begin{array}{ll}
			J_1(v^{\star}_1, v^{\star}_2) = \min\limits_{v \in  \mathcal{V}_1} J_1(v, v^{\star}_2), \\
			J_2(v^{\star}_1, v^{\star}_2) = \min\limits_{v \in  \mathcal{V}_2} J_2(v^{\star}_1, v).
		\end{array}
		\right.
	\end{equation}
	Such property \eqref{nshequi} means that \((v^{\star}_1, v^{\star}_2)\) is a Nash equilibrium for \(J_1\) and \(J_2\) if and only if the strategy chosen by any control is optimal when all other control strategies are determined. Hence, fixing the obtained controls \((v^{\star}_1, v^{\star}_2)\) in \eqref{eqq1.1}, the multi-objective inverse initial problem is reduced to a classical inverse initial problem where the state of \eqref{eqq1.1} can be determined quantitatively by the terminal data \((y(T,\cdot), y_\Gamma(T,\cdot))\). More precisely, the main result of this paper is the following interpolation inequality, which is the key to proving the backward uniqueness and the desired stability estimate for the solutions of equation \eqref{eqq1.1}.
	\begin{thm}\label{thmm1.1}
		There exist large \(\beta_i > 0\) \((i=1,2)\) and \(M_0 > 0\) such that for any initial state \((y_0, y_{\Gamma,0}) \in L^2_{\mathcal{F}_0}(\Omega; \mathbb{L}^2)\) and \(t_0 \in (0,T]\), one can find constants \(\kappa \in (0,1)\) and \(C > 0\) depending on \(t_0\), \(T\), and a Nash equilibrium \((v^*_1, v^*_2) \in \mathcal{V}_1 \times \mathcal{V}_2\) such that the solution \((y, y_{\Gamma})\) of \eqref{eqq1.1} satisfies that
		\begin{align}\label{interine2.3}
			\begin{aligned}
				&\,\mathbb{E} \int_G y^2(t_0,x) \, dx + \mathbb{E} \int_\Gamma y_\Gamma^2(t_0,x) \,  d\sigma \\
				&\leq C \left( \mathbb{E} \int_Q y^2 \, dx\,dt + \mathbb{E} \int_\Sigma y_\Gamma^2 \,  d\sigma\,dt + M_0 \right)^{1-\kappa} \times\\
				&\hspace{.45cm}\left( \mathbb{E} \int_G y^2(T,x) \, dx + \mathbb{E} \int_\Gamma y_\Gamma^2(T,x) \,  d\sigma + M_1 \right)^{\kappa},
			\end{aligned}
		\end{align}
		where \(M_1 = \mathbb{E}\displaystyle\int_Q \left(|y_{1,d}|^2 \chi_{G_{1,d}} + |y_{2,d}|^2 \chi_{G_{2,d}}\right) \, dx \, dt\).
	\end{thm}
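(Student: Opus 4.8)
The plan is to first use the Nash characterization of Section~\ref{secc3} to turn the multi-objective problem into a single coupled forward--backward stochastic system, and then to run the Carleman estimates of Section~\ref{sec3} on it. Fixing a Nash equilibrium $(v_1^\star,v_2^\star)$, the optimality conditions behind \eqref{nshequi} express each control through an adjoint state, $v_i^\star=-\beta_i^{-1}\chi_{G_i}p_i$, where $(p_i,p_{i,\Gamma})$ solves a backward stochastic parabolic equation with dynamic boundary conditions, source $\alpha_i\chi_{G_{i,d}}(y-y_{i,d})$, and zero terminal data. Substituting back into \eqref{eqq1.1} yields a system in which $(y,y_\Gamma)$ runs forward while $(p_1,p_{1,\Gamma})$ and $(p_2,p_{2,\Gamma})$ run backward, the coupling being the feedback $\beta_i^{-1}\chi_{G_i}p_i$ in the volume equation and the observation $\alpha_i\chi_{G_{i,d}}y$ in the adjoint equations; the targets $y_{i,d}$ enter only as an inhomogeneity, which will produce the constant $M_1$.

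\textbf{Weighted global estimate.} Next I would fix a weight $\theta=e^{s\alpha}$ whose spatial profile is adapted to the Wentzell boundary conditions and whose time profile singles out $t_0$, apply the forward Carleman estimate to $(y,y_\Gamma)$ and the backward Carleman estimate to each $(p_i,p_{i,\Gamma})$, and add the resulting inequalities. The forward feedback terms $\beta_i^{-1}\chi_{G_i}p_i$ are absorbed into the dominant backward Carleman terms once $\beta_i$ is taken large, while the adjoint observation terms $\alpha_i\chi_{G_{i,d}}y$ are absorbed into the dominant forward Carleman term by taking the parameter $s$ large; the residual lower-order contributions and the target inhomogeneity are collected into $M_0$ and $M_1$. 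The outcome is a single weighted estimate bounding the weighted energy of $(y,y_\Gamma)$ by the terminal data and the interior integral.

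\textbf{Extraction at $t_0$ and interpolation.} Combining this weighted estimate with the energy estimates for \eqref{eqq1.1} on the subintervals $(0,t_0)$ and $(t_0,T)$, I expect to reach a one-parameter family of bounds of the schematic form
\[
\begin{aligned}
\mathbb{E}\!\int_G y^2(t_0)\,dx+\mathbb{E}\!\int_\Gamma y_\Gamma^2(t_0)\,d\sigma
&\le Ce^{Cs}\Big(\mathbb{E}\!\int_G y^2(T)\,dx+\mathbb{E}\!\int_\Gamma y_\Gamma^2(T)\,d\sigma+M_1\Big)\\
&\quad+Ce^{-cs}\Big(\mathbb{E}\!\int_Q y^2\,dx\,dt+\mathbb{E}\!\int_\Sigma y_\Gamma^2\,d\sigma\,dt+M_0\Big),
\end{aligned}
\]
valid for all large $s$. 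Optimizing over $s$ by the elementary balancing (Young) inequality converts this additive bound into the multiplicative Hölder form \eqref{interine2.3}, with $\kappa\in(0,1)$ fixed by the ratio of the two exponential rates. Equivalently, one may read the weighted estimate as a logarithmic-convexity inequality for $t\mapsto\mathbb{E}\!\int_G y^2+\mathbb{E}\!\int_\Gamma y_\Gamma^2$ and integrate it in the lower endpoint to generate the interior factor.

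\textbf{Main obstacle.} The hard part will be the second step: pushing the Carleman machinery through the dynamic boundary conditions, where the normal-derivative coupling $\partial_\nu y$ and the Laplace--Beltrami term generate surface integrals that must be matched between the volume and surface parts of \emph{both} the forward and the backward estimates, while simultaneously absorbing the forward--backward coupling. Guaranteeing that one weight is admissible for the forward and the backward Carleman estimates at once, and that the largeness of $\beta_i$ together with that of $s$ closes the argument without destroying the terminal term, is where the main effort will go.
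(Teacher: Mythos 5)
Your overall skeleton (Nash reduction to the coupled forward--backward system \eqref{eqq4.7}, forward plus backward Carleman estimates, an additive bound of the form $Ce^{Cs}(\text{terminal}+M_1)+Ce^{-cs}(\text{interior}+M_0)$, then optimization in $s$) is exactly the paper's route, but there is a genuine gap at the central step: you propose to apply the forward Carleman estimate directly to $(y,y_\Gamma)$. The forward estimate \eqref{Carlem1} carries on its right-hand side the initial terms $\mathbb{E}\big[\theta^2(0)|\nabla z(0)|^2_{L^2(G)}+\theta^2(0)|\nabla_\Gamma z_\Gamma(0)|^2_{L^2(\Gamma)}\big]$. For the actual solution of \eqref{eqq1.1} these terms are fatal: the initial state is precisely the unknown one is trying to recover (so it cannot appear on the majorizing side), and moreover $(y_0,y_{\Gamma,0})$ is only $\mathbb{L}^2$, so $\nabla y(0)$ need not even be defined. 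The paper removes them by introducing a time cut-off $\eta$ with $\eta\equiv 0$ on $[0,t_2]$ and $\eta\equiv 1$ on $[t_1,T]$ ($t_2<t_1<t_0$), applying the Carleman estimates to $(\xi,\xi_\Gamma)=\eta(y,y_\Gamma)$ and $\eta(z^i,z^i_\Gamma,Z^i,\widehat Z^i)$, i.e.\ to the system \eqref{forbacksyst}, whose initial data vanish. This cut-off is not a technical nicety but does double duty: the commutator terms $\eta_t y$, $\eta_t z^i$ it creates are supported in $[t_2,t_1]$, where $\theta\le\theta(t_1)<\theta(t_0)$, and it is exactly the ratio $\theta^2(t_1)/\theta^2(t_0)=e^{-2s(e^{\lambda t_0}-e^{\lambda t_1})}$ that produces the decaying factor $e^{-cs}$ multiplying the interior integral in your schematic bound. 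Without the cut-off your proposal has no source for that factor at all, and no way to discard the $t=0$ terms, so the additive inequality you write down cannot be reached as stated. (The final step from $t_0$ also goes through It\^o's formula backwards on $(t_0,T)$, not through the forward energy estimate \eqref{forwenergyes} on $(0,t_0)$, which would again reintroduce the unknown initial data.)

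Two secondary points. First, your anticipated ``main obstacle'' --- designing a weight ``whose spatial profile is adapted to the Wentzell boundary conditions'' --- is misdirected: the weights in Theorems \ref{thmmS33.1} and \ref{thmmS32.1} are purely time-dependent ($\varphi=e^{\lambda t}$, $\theta=e^{s\varphi}$, $x$-independent), and this is precisely what makes the $\partial_\nu$ boundary terms generated by the volume identity cancel against those of the surface identity when the two estimates are added; a spatially varying weight would take you outside the scope of Section \ref{sec3} entirely. Second, the largeness of $\beta_i$ is what guarantees existence and uniqueness of the Nash equilibrium (Proposition \ref{prop2.1}); the absorption of the forward--backward coupling terms in the combined Carleman estimate is achieved by taking $s$ large, not $\beta_i$ large, though this is a minor reallocation rather than an error.
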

	From the theorem above, we can readily derive two important results. First, we deduce the following backward uniqueness result for the solutions of the equation \eqref{eqq1.1}.
	
	\begin{cor}\label{corol1}
		Under the assumptions of Theorem \ref{thmm1.1}, if 
		\begin{align*}
			y(T,\cdot) = 0 \quad \textnormal{in} \;\; G, \quad \mathbb{P}\textnormal{-a.s.} \quad\textnormal{and} \quad y_\Gamma(T,\cdot) = 0 \quad \textnormal{on} \;\; \Gamma, \quad \mathbb{P}\textnormal{-a.s.},
		\end{align*}
		and if the desired targets \(y_{i,d} = 0\) \((i=1,2)\) in \(Q\), $\mathbb{P}\textnormal{-a.s.}$, then there exists a Nash equilibrium \((v^*_1, v^*_2) \in \mathcal{V}_1 \times \mathcal{V}_2\) such that the corresponding solution \((y, y_\Gamma)\) of \eqref{eqq1.1} satisfies for all \(t \in (0,T]\)
		\begin{align*}
			y(t,\cdot) = 0 \quad \textnormal{in} \;\; G, \quad \mathbb{P}\textnormal{-a.s.} \quad\textnormal{and} \quad y_\Gamma(t,\cdot) = 0 \quad \textnormal{on} \;\; \Gamma, \quad \mathbb{P}\textnormal{-a.s.}
		\end{align*}
	\end{cor}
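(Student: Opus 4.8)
The plan is to obtain the corollary as an immediate consequence of the interpolation inequality \eqref{interine2.3} established in Theorem \ref{thmm1.1}. First I would fix sufficiently large $\beta_i > 0$ $(i=1,2)$ and $M_0 > 0$ as furnished by Theorem \ref{thmm1.1}, and take $(v^*_1, v^*_2) \in \mathcal{V}_1 \times \mathcal{V}_2$ to be the associated Nash equilibrium, so that the corresponding solution $(y, y_\Gamma)$ of \eqref{eqq1.1} obeys \eqref{interine2.3}. Since by hypothesis the targets vanish, $y_{i,d} = 0$ in $Q$, $\mathbb{P}$-a.s. $(i=1,2)$, the quantity $M_1 = \mathbb{E}\int_Q \big(|y_{1,d}|^2 \chi_{G_{1,d}} + |y_{2,d}|^2 \chi_{G_{2,d}}\big)\, dx\, dt$ is equal to zero.

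Next I would fix an arbitrary $t_0 \in (0,T]$ and apply \eqref{interine2.3}. Invoking the assumptions $y(T,\cdot) = 0$ in $G$ and $y_\Gamma(T,\cdot) = 0$ on $\Gamma$, $\mathbb{P}$-a.s., together with $M_1 = 0$, the factor raised to the power $\kappa$ on the right-hand side reduces to $\big(0\big)^{\kappa} = 0$, where one uses crucially that $\kappa \in (0,1)$ is strictly positive. The one point that genuinely needs checking is that the other factor, namely $\big(\mathbb{E}\int_Q y^2\, dx\, dt + \mathbb{E}\int_\Sigma y_\Gamma^2\, d\sigma\, dt + M_0\big)^{1-\kappa}$, is finite, so that the product on the right is truly zero rather than an indeterminate expression of the form $0\cdot\infty$. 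This finiteness follows at once from the well-posedness of \eqref{eqq1.1} and the energy estimate \eqref{forwenergyes}, which ensure $(y, y_\Gamma) \in L^2_\mathcal{F}(\Omega; C([0,T]; \mathbb{L}^2))$ and hence $\mathbb{E}\int_Q y^2\, dx\, dt + \mathbb{E}\int_\Sigma y_\Gamma^2\, d\sigma\, dt < \infty$, while $M_0$ is a fixed finite constant.

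Consequently, the left-hand side of \eqref{interine2.3} vanishes, that is, $\mathbb{E}\int_G y^2(t_0,x)\, dx + \mathbb{E}\int_\Gamma y_\Gamma^2(t_0,x)\, d\sigma = 0$. As both summands are nonnegative, each must be zero, which yields $y(t_0,\cdot) = 0$ in $G$ and $y_\Gamma(t_0,\cdot) = 0$ on $\Gamma$, $\mathbb{P}$-a.s. Since $t_0 \in (0,T]$ was arbitrary, the desired conclusion holds for all $t \in (0,T]$. I do not expect any real obstacle here: the entire analytic difficulty is concentrated in Theorem \ref{thmm1.1}, and the corollary is little more than the observation that a positive power of the vanishing terminal data forces the interior quantity to vanish. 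If one wished to state the result uniformly in time, the time-continuity of $(y,y_\Gamma)$ in $\mathbb{L}^2$ could be invoked as well, but this is not needed since \eqref{interine2.3} already applies to each individual $t_0 \in (0,T]$.
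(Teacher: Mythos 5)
Your proposal is correct and coincides with the paper's own treatment: the paper simply notes that Corollary \ref{corol1} is an immediate consequence of Theorem \ref{thmm1.1}, exactly as you argue by noting $M_1=0$, the vanishing of the terminal data, and hence the vanishing of the $\kappa$-power factor in \eqref{interine2.3} for each $t_0\in(0,T]$. Your extra check that the $(1-\kappa)$-power factor is finite (via \eqref{forwenergyes}), so that the right-hand side is genuinely zero, is a sensible detail the paper leaves implicit.
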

	
	Secondly, we obtain the following conditional stability estimate for solutions of \eqref{eqq1.1}. Let us first define the following set: For any \(M > 0\)
	\[
	\mathcal{U}_M = \left\{ (f,f_\Gamma) \in L^2_{\mathcal{F}_0}(\Omega; \mathbb{L}^2) \;\big|\; |(f,f_\Gamma)|_{L^2_{\mathcal{F}_0}(\Omega; \mathbb{L}^2)} \leq M \right\}.
	\]
	
	\begin{cor}\label{corol2}
		Under the assumptions of Theorem \ref{thmm1.1}, for any \((y_0, y_{\Gamma,0}) \in \mathcal{U}_M\) and \(t_0 \in (0,T]\), there exists a Nash equilibrium \((v^*_1, v^*_2) \in \mathcal{V}_1 \times \mathcal{V}_2\),  \(\kappa \in (0,1)\) and \(C > 0\) depending on \(t_0\), \(T\) such that the associated solution \((y, y_\Gamma)\) of equation \eqref{eqq1.1} satisfies 
		\begin{align*}
			\mathbb{E} \int_G y^2(t_0, x) \, dx + \mathbb{E} \int_\Gamma y_\Gamma^2(t_0, x) \,  d\sigma &\leq C(M, M_0) \left( \mathbb{E} \int_G y^2(T, x) \, dx + \mathbb{E} \int_\Gamma y_\Gamma^2(T, x) \,  d\sigma + M_1 \right)^{\kappa},
		\end{align*}
		where \(M_1 = \mathbb{E} \displaystyle\int_Q \left(|y_{1,d}|^2 \chi_{G_{1,d}} + |y_{2,d}|^2 \chi_{G_{2,d}}\right) \, dx \, dt\).
	\end{cor}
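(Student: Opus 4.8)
The plan is to derive Corollary \ref{corol2} directly from the interpolation inequality \eqref{interine2.3}. The second factor on the right-hand side of \eqref{interine2.3} already coincides with the quantity appearing in the corollary, so the whole task reduces to bounding the first (low-order) factor
\[
\mathbb{E}\int_Q y^2\,dx\,dt + \mathbb{E}\int_\Sigma y_\Gamma^2\,d\sigma\,dt + M_0
\]
by a constant depending only on $M$, $M_0$ and the fixed targets (through $M_1$). Once this is done, raising that factor to the power $1-\kappa$ absorbs it into the prefactor, converting $C(\cdots)^{1-\kappa}$ into a constant $C(M,M_0)$ and leaving exactly the claimed estimate.

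First I would bound the Nash equilibrium controls. Writing $(y,y_\Gamma)$ for the state associated with $(v_1^\star,v_2^\star)$ and testing each optimality condition in \eqref{nshequi} against the admissible choice $v\equiv 0$ gives $J_1(v_1^\star,v_2^\star)\le J_1(0,v_2^\star)$ and $J_2(v_1^\star,v_2^\star)\le J_2(v_1^\star,0)$. Discarding the nonnegative tracking term on the left of \eqref{Ji.12} and using $|y-y_{i,d}|^2\le 2|y|^2+2|y_{i,d}|^2$, I would then invoke the energy estimate \eqref{forwenergyes} to control the states driven by $(0,v_2^\star)$ and $(v_1^\star,0)$ in terms of $M$ and $|v_2^\star|_{\mathcal{V}_2}$, $|v_1^\star|_{\mathcal{V}_1}$ respectively. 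This yields the coupled pair
\[
\tfrac{\beta_1}{2}|v_1^\star|_{\mathcal{V}_1}^2 \le C\big(M^2+|v_2^\star|_{\mathcal{V}_2}^2+M_1\big),\qquad
\tfrac{\beta_2}{2}|v_2^\star|_{\mathcal{V}_2}^2 \le C\big(M^2+|v_1^\star|_{\mathcal{V}_1}^2+M_1\big).
\]

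Adding these inequalities and choosing $\beta_1,\beta_2$ large enough — compatible with the largeness hypothesis already present in Theorem \ref{thmm1.1} — I can absorb the coupling terms $C|v_i^\star|_{\mathcal{V}_i}^2$ into the left-hand side to obtain $|v_1^\star|_{\mathcal{V}_1}^2+|v_2^\star|_{\mathcal{V}_2}^2\le C(M^2+M_1)$. Reinserting this into \eqref{forwenergyes} bounds the whole energy of $(y,y_\Gamma)$, in particular
\[
\mathbb{E}\int_Q y^2\,dx\,dt+\mathbb{E}\int_\Sigma y_\Gamma^2\,d\sigma\,dt \le C\big(M^2+M_1\big).
\]
Plugging this bound into \eqref{interine2.3} then gives the conditional stability estimate with $C(M,M_0)=C\,\big(C(M^2+M_1)+M_0\big)^{1-\kappa}$.

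I expect the main obstacle to be precisely this coupled control bound: unlike a single optimal control problem, the Nash structure intertwines $v_1^\star$ and $v_2^\star$, so neither can be estimated on its own, and the naive comparison argument closes only after the absorption step. That step is legitimate only because the penalization parameters $\beta_i$ are taken large relative to $\alpha_i$ and the constant in \eqref{forwenergyes}; keeping track of how large they must be is the delicate point, and it meshes with the largeness already needed for the existence and characterization of the Nash equilibrium in Section \ref{secc3}.
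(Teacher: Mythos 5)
Your proposal is correct: the corollary indeed reduces to bounding the low-order factor $\mathbb{E}\int_Q y^2\,dx\,dt+\mathbb{E}\int_\Sigma y_\Gamma^2\,d\sigma\,dt+M_0$ by a constant, and your Nash-comparison argument achieves this. But the route differs from the paper's. The paper treats the corollary as an ``easy consequence'' of Theorem \ref{thmm1.1}, the intended mechanism being the explicit characterization of Proposition \ref{prop2.1}: since $v^\star_i=-\frac{1}{\beta_i}z^i|_{(0,T)\times G_i}$ and $M_0$ is by definition $\mathbb{E}\int_Q(|z^1|^2+|z^2|^2)\,dx\,dt+\mathbb{E}\int_\Sigma(|z^1_\Gamma|^2+|z^2_\Gamma|^2)\,d\sigma\,dt$, one immediately gets $|v^\star_i|^2_{\mathcal{V}_i}\leq \beta_i^{-2}M_0$, and the energy estimate \eqref{forwenergyes} then gives $\mathbb{E}\int_Q y^2\,dx\,dt+\mathbb{E}\int_\Sigma y_\Gamma^2\,d\sigma\,dt\leq C(M^2+M_0)$ with no extra work; this yields exactly the claimed constant $C(M,M_0)$. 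You instead extract the control bound from the optimality conditions \eqref{nshequi} by comparison with $v\equiv 0$, plus an absorption step requiring $\beta_i$ large. This is legitimate and more robust (it never uses the adjoint characterization or the internal definition of $M_0$), but it costs two things the paper's route avoids: (i) your largeness requirement on $\beta_1,\beta_2$ is an additional constraint that must be made compatible with the one already fixed in Theorem \ref{thmm1.1} — harmless, since both are ``large enough'' conditions, but it means the $\beta_i$ may need to be re-chosen, whereas the paper's argument works for whatever $\beta_i$ the theorem fixed; and (ii) your final constant is $C\bigl(C(M^2+M_1)+M_0\bigr)^{1-\kappa}$, which also depends on $M_1$. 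Since the targets $y_{i,d}$ are fixed data this is cosmetically acceptable, but it does not literally match the stated form $C(M,M_0)$, which the characterization-based bound reproduces exactly.
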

	
	\begin{rmk}
		\begin{enumerate}[1.]
			\item For simplicity, we use only two controls, \(v_1\) and \(v_2\), in this work. However, similar computations remain valid for any number of controls. Furthermore, in the absence of controls, our multi-objective inverse problem naturally reduces to the classical inverse problem. Thus, one can also obtain backward uniqueness and a conditional stability estimate for the initial conditions of equation \eqref{eqq1.1} (with \(v_1 \equiv v_2 \equiv 0\)).
			\item If we take \(a_2 \equiv b_2 \equiv 0\) in equation \eqref{eqq1.1} and assume all the coefficients \(a_1, b_1, v_1, v_2\), and \((y_0, y_{\Gamma,0})\) are deterministic functions, i.e., \(a_1 \in L^\infty(Q)\), \(b_1 \in L^\infty(\Sigma)\), \(v_1 \in L^2(0,T; L^2(G_1))\), \(v_2 \in L^2(0,T; L^2(G_2))\), and \((y_0, y_{\Gamma,0}) \in \mathbb{L}^2\), then we can deduce results such as backward uniqueness and a conditional stability estimate for the initial conditions under the Nash strategy (similar to those provided in Corollaries \ref{corol1} and \ref{corol2}) for the following deterministic heat equation with dynamic boundary conditions
			\begin{equation*}
				\begin{cases}
					\begin{array}{ll}
						\partial_t y - \Delta y  = a_1 y + \chi_{G_1}(x) v_1 + \chi_{G_2}(x) v_2 & \textnormal{in } Q,\\
						\partial_t y_\Gamma - \Delta_\Gamma y_\Gamma  + \partial_\nu y  = b_1 y_\Gamma  & \textnormal{on } \Sigma,\\
						y_\Gamma = y \vert_\Gamma & \textnormal{on } \Sigma,\\
						(y, y_\Gamma) \vert_{t=0} = (y_0, y_{\Gamma,0}) & \textnormal{in } G \times \Gamma.
					\end{array}
				\end{cases}
			\end{equation*}
		\end{enumerate}
	\end{rmk}
	\section{Nash Rquilibrium}\label{secc3}
	Since the functionals \(J_1\) and \(J_2\) are differentiable and convex, the pair \((v^{\star}_1, v^{\star}_2) \in \mathcal{V}_1 \times \mathcal{V}_2\) is a Nash equilibrium for \(J_1\) and \(J_2\), respectively, if and only if
	\begin{equation}\label{NE7}
		\left\{
		\begin{array}{ll}
			J_1'(v^{\star}_1, v^{\star}_2)(v,0) = 0, & \forall \, v \in \mathcal{V}_1, \\\\
			J_2'(v^{\star}_1, v^{\star}_2)(0,v) = 0, & \forall \, v \in \mathcal{V}_2,
		\end{array}
		\right.
	\end{equation}
	where \(J_1'\) and \(J_2'\) denote the Fréchet derivatives of \(J_1\) and \(J_2\), respectively. Following the same ideas as in \cite[Section 4]{BEMOstoch}, the results regarding the existence, uniqueness, and characterization of the Nash equilibrium are as follows.
	\begin{prop}\label{prop2.1}
		For sufficiently large \(\beta_i > 0\) \((i = 1, 2)\), there exists a unique Nash equilibrium \((v^{\star}_1, v^{\star}_2) \in \mathcal{V}_1 \times \mathcal{V}_2\) for \(J_1\) and \(J_2\), respectively. Moreover,
		\begin{equation*}
			v^{\star}_i = -\frac{1}{\beta_i} z^i \big|_{(0,T) \times G_i}, \qquad i = 1, 2,
		\end{equation*}
		where \(\left(z^i, z^i_\Gamma, Z^i, \widehat{Z}^i\right)\) is the solution of the following backward stochastic reaction-diffusion equation
		\begin{equation}\label{eqq4.701}
			\begin{cases}
				\begin{array}{ll}
					dz^i + \Delta z^i \, dt = \big[-a_1 z^i - a_2 Z^i - \alpha_i (y - y_{i,d}) \chi_{G_{i,d}}(x)\big] \, dt + Z^i \, dW(t) & \textnormal{in } Q, \\ 
					dz^i_\Gamma + \Delta_\Gamma z^i_\Gamma \, dt - \partial_\nu z^i \, dt = \big[-b_1 z^i_\Gamma - b_2 \widehat{Z}^i\big] \, dt + \widehat{Z}^i \, dW(t) & \textnormal{on } \Sigma, \\ 
					z^i_\Gamma = z^i|_\Gamma & \textnormal{on } \Sigma, \\ 
					(z^i, z^i_\Gamma)|_{t=T} = (0, 0), \qquad i = 1, 2, & \textnormal{in } G \times \Gamma.
				\end{array}
			\end{cases}
		\end{equation}
	\end{prop}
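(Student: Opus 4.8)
The plan is to characterize the equilibrium through the first-order conditions \eqref{NE7} and then establish existence and uniqueness by a contraction argument for $\beta_i$ large. First I would compute the Fréchet derivatives. Since the map $(v_1,v_2)\mapsto(y,y_\Gamma)$ is affine, the directional derivative of the state at $(v_1^\star,v_2^\star)$ in the direction $(v,0)$ is the solution $(\hat y,\hat y_\Gamma)$ of the linearized system obtained from \eqref{eqq1.1} by dropping the initial data and keeping only the source $\chi_{G_1}v$; consequently
\begin{equation*}
J_1'(v_1^\star,v_2^\star)(v,0)=\alpha_1\,\mathbb{E}\int_0^T\!\!\int_{G_{1,d}}(y-y_{1,d})\,\hat y\,dx\,dt+\beta_1\,\mathbb{E}\int_0^T\!\!\int_{G_1}v_1^\star\,v\,dx\,dt,
\end{equation*}
and analogously for $J_2'$ with direction $(0,v)$ and source $\chi_{G_2}v$.

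Next I would eliminate $\hat y$ through duality with the adjoint system \eqref{eqq4.701}, whose source $-\alpha_1(y-y_{1,d})\chi_{G_{1,d}}$ is tailored to the cost. Applying Itô's formula to $t\mapsto\langle(\hat y,\hat y_\Gamma),(z^1,z^1_\Gamma)\rangle_{\mathbb L^2}$, integrating over $[0,T]$ and taking expectations, the zeroth-order terms in $a_1,a_2,b_1,b_2$ and the Itô cross terms cancel by construction, the martingale parts have zero mean, and the terminal condition $(z^1,z^1_\Gamma)|_{t=T}=0$ together with $(\hat y,\hat y_\Gamma)|_{t=0}=0$ removes the endpoint contributions. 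The delicate point is the volume-surface coupling: Green's formula applied to $\int_G(z^1\Delta\hat y-\hat y\Delta z^1)\,dx$ produces boundary integrals of $z^1\partial_\nu\hat y-\hat y\partial_\nu z^1$, while the surface divergence formula applied to $\int_\Gamma(z^1_\Gamma\Delta_\Gamma\hat y_\Gamma-\hat y_\Gamma\Delta_\Gamma z^1_\Gamma)\,d\sigma$ vanishes on the closed surface $\Gamma$; using $\hat y_\Gamma=\hat y|_\Gamma$, $z^1_\Gamma=z^1|_\Gamma$, the normal-derivative terms generated by the $\pm\partial_\nu$ couplings in the volume and surface equations exactly cancel those coming from Green's formula. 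This leaves the duality identity
\begin{equation*}
\alpha_1\,\mathbb{E}\int_0^T\!\!\int_{G_{1,d}}(y-y_{1,d})\,\hat y\,dx\,dt=\mathbb{E}\int_0^T\!\!\int_{G_1}z^1\,v\,dx\,dt,
\end{equation*}
so that $J_1'(v_1^\star,v_2^\star)(v,0)=\mathbb{E}\int_0^T\int_{G_1}(z^1+\beta_1 v_1^\star)\,v\,dx\,dt$. Imposing \eqref{NE7} for every $v\in\mathcal V_1$ forces $z^1+\beta_1 v_1^\star=0$ on $(0,T)\times G_1$, i.e.\ $v_1^\star=-\tfrac1{\beta_1}z^1|_{(0,T)\times G_1}$, and symmetrically for $v_2^\star$.

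For existence and uniqueness I would recast the system as a fixed-point problem: a pair $(v_1^\star,v_2^\star)$ is a Nash equilibrium if and only if it is a fixed point of the map $\Lambda:\mathcal V_1\times\mathcal V_2\to\mathcal V_1\times\mathcal V_2$ that solves \eqref{eqq1.1} for the state, then solves the two backward systems \eqref{eqq4.701} for $z^1,z^2$, and returns $\big(-\tfrac1{\beta_1}z^1|_{G_1},-\tfrac1{\beta_2}z^2|_{G_2}\big)$. For two control pairs the initial data and targets cancel in the difference by linearity, so chaining the forward estimate \eqref{forwenergyes} with the corresponding well-posedness estimate for \eqref{eqq4.701} (cf.\ \cite{BackSPEwithDBC}) gives
\begin{equation*}
|\Lambda(v_1,v_2)-\Lambda(\tilde v_1,\tilde v_2)|_{\mathcal V_1\times\mathcal V_2}\le\frac{C}{\min(\beta_1,\beta_2)}\,|(v_1,v_2)-(\tilde v_1,\tilde v_2)|_{\mathcal V_1\times\mathcal V_2}.
\end{equation*}
I expect the main obstacle to be precisely the bookkeeping in the duality step under the dynamic boundary conditions—tracking the surface integrals and verifying the exact cancellation of the normal-derivative couplings—since, once the energy estimates are available, choosing $\beta_1,\beta_2$ large enough so that $C/\min(\beta_1,\beta_2)<1$ makes $\Lambda$ a contraction, and the Banach fixed-point theorem then yields the unique Nash equilibrium.
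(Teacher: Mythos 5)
Your proof is correct and follows essentially the route the paper itself takes — which in the paper consists only of invoking convexity and differentiability to reduce the Nash property to the first-order conditions \eqref{NE7} and then citing \cite[Section 4]{BEMOstoch} for the rest: there, as in your argument, the derivatives $J_i'$ are rewritten through Itô duality with the backward adjoint system \eqref{eqq4.701}, and existence and uniqueness for large $\beta_i$ come from a perturbation-type (fixed-point/coercivity) argument. Your duality computation — including the exact cancellation of the $\partial_\nu$ boundary terms against the Green's-formula terms via the trace identifications $\hat y_\Gamma=\hat y|_\Gamma$, $z^i_\Gamma=z^i|_\Gamma$, and the cancellation of the $a_2,b_2$ drift terms against the Itô cross-variation terms — and your contraction estimate chaining \eqref{forwenergyes} with the well-posedness of the backward system are both sound, and they supply precisely the details that the paper delegates to its reference.
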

	
	According to Proposition \ref{prop2.1}, we now conclude by proving the interpolation inequality \eqref{interine2.3} for the solutions of the following coupled forward-backward stochastic system, derived from equations \eqref{eqq1.1} and \eqref{eqq4.701}
	
	\begin{equation}\label{eqq4.7}
		\begin{cases}
			\begin{array}{ll}
				dy - \Delta y \,dt = \left[a_1 y - \frac{1}{\beta_1} \chi_{G_1}(x) z^1 - \frac{1}{\beta_2} \chi_{G_2}(x) z^2\right] \,dt + a_2 y \,dW(t) & \textnormal{in } Q, \\
				dy_\Gamma - \Delta_\Gamma y_\Gamma \,dt + \partial_\nu y \,dt = b_1 y_\Gamma \,dt + b_2 y_\Gamma \,dW(t) & \textnormal{on } \Sigma, \\
				dz^{i} + \Delta z^{i} \,dt = \left[-a_1 z^{i} - a_2 Z^{i} - \alpha_i (y - y_{i,d}) \chi_{G_{i,d}}(x)\right] \,dt + Z^{i} \,dW(t) & \textnormal{in } Q, \\
				dz^{i}_{\Gamma} + \Delta_\Gamma z^{i}_\Gamma \,dt - \partial_\nu z^{i} \,dt = \left[-b_1 z^{i}_{\Gamma} - b_2 \widehat{Z}^{i}\right] \,dt + \widehat{Z}^{i} \,dW(t) & \textnormal{on } \Sigma, \\
				y_\Gamma = y|_\Gamma,\quad z^i_{\Gamma} = z^{i}|_\Gamma & \textnormal{on } \Sigma, \\
				(y, y_\Gamma)|_{t=0} = (y_0, y_{\Gamma, 0}) & \textnormal{in } G \times \Gamma, \\
				(z^i, z^{i}_\Gamma)|_{t=T} = (0, 0), \quad i=1,2 & \textnormal{in } G \times \Gamma.
			\end{array}
		\end{cases}
	\end{equation}
	
	\section{Carleman Estimates for Stochastic Parabolic Equations}\label{sec3}
	This section is devoted to proving new Carleman estimates for forward and backward stochastic parabolic equations with dynamic boundary conditions. We will employ a weighted identity method for this purpose. To begin, we introduce the following weight functions:
	$$
	\varphi(t) = e^{\lambda t}, \qquad \theta = e^{s \varphi},
	$$
	where \(t \in (0, +\infty)\) and \(\lambda, s > 0\) are sufficiently large parameters.
	
	Let us first prove two weighted identities for the stochastic parabolic operator \( ``du - b \Delta u \, dt "\) and the boundary stochastic parabolic operator $``du_\Gamma - b \Delta_\Gamma u_\Gamma \, dt + b \partial_\nu u \, dt"$, where \( u_\Gamma \) is the trace of \( u \) and $b\in\mathbb{R}$. These weighted identities will play a crucial role in the proof of our Carleman estimates.
	\begin{prop}\label{prop3.1}
		Let \(b \in \mathbb{R}\). Assume that \((u,u_\Gamma)\) is an \(\mathbb{H}^2\)-valued continuous semi-martingale, and define \((h,h_\Gamma) = \theta (u,u_\Gamma)\).
		\begin{enumerate}[1.]
			\item For any \(t \in [0,T]\) and a.e. \((x,\omega) \in G \times \Omega\), the following weighted identity holds
			\begin{align}\label{iden1}
				\begin{aligned}
					& -\theta \left(b \Delta h + s \lambda \varphi h\right) \left( du - b \Delta u \, dt \right) + \frac{1}{4} b \lambda \theta h \left( du - b \Delta u \, dt \right) \\
					& = -b \, \textnormal{div} \left[ \nabla h \, dh + \frac{1}{4} b \lambda h \nabla h \, dt \right] + \frac{1}{2} d \left(b |\nabla h|^2 - s \lambda \varphi h^2 + \frac{1}{4} b \lambda h^2 \right) \\
					& \quad - \frac{1}{2} b |d \nabla h|^2 + \frac{1}{4} b^2 \lambda |\nabla h|^2 \, dt + \frac{1}{2} s \lambda^2 \varphi h^2 \, dt - \frac{1}{4} b s \lambda^2 \varphi h^2 \, dt \\
					& \quad + \frac{1}{2} s \lambda \varphi (dh)^2 - \frac{1}{8} b \lambda (dh)^2 + \left(b \Delta h + s \lambda \varphi h\right)^2 \, dt.
				\end{aligned}
			\end{align}
			
			\item For any \(t \in [0,T]\) and a.e. \((x,\omega) \in \Gamma \times \Omega\), we have the following identity
			\begin{align}\label{iden2}
				\begin{aligned}
					& -\theta \left(b \Delta_\Gamma h_\Gamma + s \lambda \varphi h_\Gamma\right) \left( du_\Gamma - b \Delta_\Gamma u_\Gamma \, dt - b \partial_\nu u \, dt \right) \\
					& + \frac{1}{4} b \lambda \theta h_\Gamma \left( du_\Gamma - b \Delta_\Gamma u_\Gamma \, dt + b \partial_\nu u \, dt \right) \\
					& = -b \, \textnormal{div}_\Gamma \left[ \nabla_\Gamma h_\Gamma \, dh_\Gamma + \frac{1}{4} b \lambda h_\Gamma \nabla_\Gamma h_\Gamma \, dt \right] + \frac{1}{2} d \left(b |\nabla_\Gamma h_\Gamma|^2 - s \lambda \varphi h_\Gamma^2 + \frac{1}{4} b \lambda h_\Gamma^2 \right) \\
					& \quad - \frac{1}{2} b |d \nabla_\Gamma h_\Gamma|^2 + \frac{1}{4} b^2 \lambda |\nabla_\Gamma h_\Gamma|^2 \, dt + \frac{1}{2} s \lambda^2 \varphi h_\Gamma^2 \, dt - \frac{1}{4} b s \lambda^2 \varphi h_\Gamma^2 \, dt \\
					& \quad + \frac{1}{2} s \lambda \varphi (dh_\Gamma)^2 - \frac{1}{8} b \lambda (dh_\Gamma)^2 + \left(b \Delta_\Gamma h_\Gamma + s \lambda \varphi h_\Gamma\right)^2 \, dt \\
					& \quad + b \left(b \Delta_\Gamma h_\Gamma + s \lambda \varphi h_\Gamma\right) \partial_\nu h \, dt + \frac{1}{4} b^2 \lambda h_\Gamma \partial_\Gamma h \, dt.
				\end{aligned}
			\end{align}
		\end{enumerate}
	\end{prop}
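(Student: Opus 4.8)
The plan is to treat both identities as purely algebraic consequences of Itô's formula combined with pointwise integration-by-parts (divergence product rules), carried out for fixed $(t,x,\omega)$. The essential simplification is that $\theta = e^{s\varphi}$ depends only on $t$, so the stochastic calculus is one-dimensional in time while all spatial differential operators act as ordinary multipliers that commute with the Itô differential $d$.

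First I would record the relations forced by $\varphi(t) = e^{\lambda t}$ and $\theta = e^{s\varphi}$, namely $\varphi_t = \lambda\varphi$ and $\theta_t = s\lambda\varphi\,\theta$. Writing $u = \theta^{-1}h$ and using that $\theta$ is deterministic and $x$-independent, a direct computation yields the fundamental substitution
\[
\theta\,(du - b\Delta u\,dt) = dh - (b\Delta h + s\lambda\varphi h)\,dt.
\]
Abbreviating $P := b\Delta h + s\lambda\varphi h$, the left-hand side of \eqref{iden1} then becomes $-P\,(dh - P\,dt) + \tfrac14 b\lambda h\,(dh - P\,dt)$, which expands into $-P\,dh + P^2\,dt + \tfrac14 b\lambda h\,dh - \tfrac14 b\lambda h P\,dt$.

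The core of the argument is to rewrite the two products $-P\,dh$ and $\tfrac14 b\lambda h\,dh$ as a divergence, plus an exact differential, plus lower-order remainders. For this I would invoke three elementary identities: the Itô product rules $h\,dh = \tfrac12 d(h^2) - \tfrac12 (dh)^2$ and $\nabla h\cdot d(\nabla h) = \tfrac12 d|\nabla h|^2 - \tfrac12 |d\nabla h|^2$; the pointwise divergence product rules $\Delta h\,dh = \textnormal{div}(\nabla h\,dh) - \nabla h\cdot d(\nabla h)$ and $h\Delta h = \textnormal{div}(h\nabla h) - |\nabla h|^2$; and the time product rule $s\lambda\varphi\,d(h^2) = d(s\lambda\varphi h^2) - s\lambda^2\varphi h^2\,dt$, which absorbs the $t$-dependence of the weight. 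Substituting these and collecting, the two divergence contributions combine into $-b\,\textnormal{div}[\nabla h\,dh + \tfrac14 b\lambda h\nabla h\,dt]$, the three exact-differential contributions combine into $\tfrac12 d(b|\nabla h|^2 - s\lambda\varphi h^2 + \tfrac14 b\lambda h^2)$, and the surviving $dt$- and quadratic-variation terms reproduce precisely the stated right-hand side; matching coefficients term by term closes part 1.

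For part 2 I would run the identical scheme with $(\Delta,\nabla,\textnormal{div})$ replaced by $(\Delta_\Gamma,\nabla_\Gamma,\textnormal{div}_\Gamma)$, using the tangential product rules $\Delta_\Gamma h_\Gamma\,dh_\Gamma = \textnormal{div}_\Gamma(\nabla_\Gamma h_\Gamma\,dh_\Gamma) - \nabla_\Gamma h_\Gamma\cdot d(\nabla_\Gamma h_\Gamma)$ and $h_\Gamma\Delta_\Gamma h_\Gamma = \textnormal{div}_\Gamma(h_\Gamma\nabla_\Gamma h_\Gamma) - |\nabla_\Gamma h_\Gamma|^2$ in place of the flat ones. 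The only structural difference is the coupling term: since $\theta$ is $x$-independent, $\theta\,b\partial_\nu u\,dt = b\partial_\nu h\,dt$, and feeding the prescribed sign combination ($-b\partial_\nu u$ in the first multiplier, $+b\partial_\nu u$ in the second) through the expansion leaves exactly the two uncancelled terms $b(b\Delta_\Gamma h_\Gamma + s\lambda\varphi h_\Gamma)\partial_\nu h\,dt + \tfrac14 b^2\lambda h_\Gamma\partial_\nu h\,dt$ appended to the otherwise verbatim boundary analogue of \eqref{iden1}; this sign choice is precisely what will later let these normal-derivative terms cancel against the boundary trace produced by the interior divergence term. I expect the main obstacle to be purely one of bookkeeping rather than genuine analysis: correctly tracking the Itô correction terms $(dh)^2$, $(dh_\Gamma)^2$, $|d\nabla h|^2$ and $|d\nabla_\Gamma h_\Gamma|^2$ — which appear exactly because $(u,u_\Gamma)$ is a semi-martingale and not a smooth path — and ensuring that the fractional coefficients $\tfrac12$, $\tfrac14$, $\tfrac18$ survive intact through the three simultaneous rewritings.
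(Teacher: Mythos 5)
Your proposal is correct and follows essentially the same route as the paper's own proof: substitute $\theta\,(du - b\Delta u\,dt) = dh - (b\Delta h + s\lambda\varphi h)\,dt$ via Itô's formula (exploiting that $\theta$ is $x$-independent), expand using the Itô product rules $h\,dh = \tfrac12 d(h^2) - \tfrac12(dh)^2$ and $\nabla h\cdot d\nabla h = \tfrac12 d|\nabla h|^2 - \tfrac12|d\nabla h|^2$ together with the divergence rules, absorb the time-dependence of the weight through $\varphi_t = \lambda\varphi$, and repeat verbatim on the boundary with tangential operators, tracking the uncancelled normal-derivative terms. The only cosmetic difference is organizational (the paper splits the left-hand side into two pieces $I$ and $J$ computed separately, while you expand a single expression with the abbreviation $P$), and your $\partial_\nu h$ in the final coupling term is in fact the correct reading of the paper's typographical ``$\partial_\Gamma h$''.
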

	\begin{proof}
		We follow some ideas from the proof of \cite[Proposition 2.1]{StIP2}.
		
		\begin{enumerate}[1.]
			\item To compute the terms on the left-hand side of the identity \eqref{iden1}, we define
			\[
			I = -\theta \left(b \Delta h + s \lambda \varphi h\right) \left( du - b \Delta u \, dt \right),
			\qquad J = \frac{1}{4} b \lambda \theta h \left( du - b \Delta u \, dt \right).
			\]
			Using Itô's formula, we have 
			\begin{align*}
				I &= -\theta \left(b \Delta h + s \lambda \varphi h\right) \left( du - b \Delta u \, dt \right) \\
				&= -\left(b \Delta h + s \lambda \varphi h\right) \left(dh - s \lambda \varphi h \, dt - b \Delta h \, dt\right) \\
				&= -b \Delta h \, dh - s \lambda \varphi h \, dh + \left(b \Delta h + s \lambda \varphi h\right)^2 \, dt \\
				&= -b \, \textnormal{div} (\nabla h \, dh) + \frac{1}{2} b \, d |\nabla h|^2 - \frac{1}{2} b |d \nabla h|^2 + \frac{1}{2} s \lambda^2 \varphi h^2 \, dt \\
				&\quad - \frac{1}{2} d (s \lambda \varphi h^2) + \frac{1}{2} s \lambda \varphi (dh)^2 + \left(b \Delta h + s \lambda \varphi h\right)^2 \, dt.
			\end{align*}
			Similarly to $I$, we get
			\begin{align*}
				J &= \frac{1}{4} b \lambda \theta h \left( du - b \Delta u \, dt \right) \\
				&= \frac{1}{4} b \lambda h \left(dh - s \lambda \varphi h \, dt - b \Delta h \, dt\right) \\
				&= \frac{1}{4} b \lambda h \, dh - \frac{1}{4} b \lambda h \, (s \lambda \varphi h \, dt) - \frac{1}{4} b \lambda h \, (b \Delta h \, dt) \\
				&= \frac{1}{4} b \lambda h \, dh - \frac{1}{4} b s \lambda^2 \varphi h^2 \, dt - \frac{1}{4} b^2 \lambda \, \textnormal{div} (h \nabla h) \, dt + \frac{1}{4} b^2 \lambda |\nabla h|^2 \, dt.
			\end{align*}
			Combining \(I\) and \(J\), we obtain the identity \eqref{iden1}.
			\item For the boundary case, notice that
			\begin{align*}
				& -\theta \left(b \Delta_\Gamma h_\Gamma + s \lambda \varphi h_\Gamma\right) \left( du_\Gamma - b \Delta_\Gamma u_\Gamma \, dt - b \partial_\nu u \, dt \right) \\
				&\quad + \frac{1}{4} b \lambda \theta h_\Gamma \left( du_\Gamma - b \Delta_\Gamma u_\Gamma \, dt + b \partial_\nu u \, dt \right) \\
				&= I_\Gamma + J_\Gamma + b \theta \left(b \Delta_\Gamma h_\Gamma + s \lambda \varphi h_\Gamma\right) \partial_\nu u \, dt + \frac{1}{4} b^2 \lambda \theta h_\Gamma \partial_\Gamma u \, dt,
			\end{align*}
			where
			\[
			I_\Gamma = -\theta \left(b \Delta_\Gamma h_\Gamma + s \lambda \varphi h_\Gamma\right) \left( du_\Gamma - b \Delta_\Gamma u_\Gamma \, dt \right),\qquad
			J_\Gamma = \frac{1}{4} b \lambda \theta h_\Gamma \left( du_\Gamma - b \Delta_\Gamma u_\Gamma \, dt \right).
			\]
			Since the function \(\theta\) is $x$-independent, similar computations as for \eqref{iden1} apply to \(I_\Gamma\) and \(J_\Gamma\). Thus, we derive the identity \eqref{iden2}.
		\end{enumerate}
	\end{proof}
	\subsection{Carleman Estimate for Forward Stochastic Parabolic Equations}
	
	Let us consider the following general forward stochastic parabolic equation
	\begin{equation}\label{eqqgfr}
		\begin{cases}
			\begin{array}{ll}
				dz - \Delta z \, dt = f_1 \, dt + g_1 \, dW(t) & \textnormal{in } Q, \\
				dz_\Gamma - \Delta_\Gamma z_\Gamma \, dt + \partial_\nu z \, dt = f_2 \, dt + g_2 \, dW(t) & \textnormal{on } \Sigma, \\
				z_\Gamma = z|_\Gamma & \textnormal{on } \Sigma, \\
				(z, z_\Gamma)|_{t=0} = (z_0, z_{\Gamma,0}) & \textnormal{in } G \times \Gamma,
			\end{array}
		\end{cases}
	\end{equation}
	where \((z_0, z_{\Gamma,0}) \in L^2_{\mathcal{F}_0}(\Omega; \mathbb{L}^2)\) is the initial state, \(f_1 \in L^2_\mathcal{F}(0,T; L^2(G))\), \(g_1 \in L^2_\mathcal{F}(0,T; H^1(G))\), \(f_2 \in L^2_\mathcal{F}(0,T; L^2(\Gamma))\), and \(g_2 \in L^2_\mathcal{F}(0,T; H^1(\Gamma))\). We have the following Carleman estimate.
	\begin{thm}\label{thmmS33.1}
		There exists a large constant \( s_1 >0 \) such that for any \((z_0, z_{\Gamma,0}) \in L^2_{\mathcal{F}_0}(\Omega; \mathbb{L}^2)\), \( f_1 \in L^2_\mathcal{F}(0,T; L^2(G)) \), \( g_1 \in L^2_\mathcal{F}(0,T; H^1(G)) \), \( f_2 \in L^2_\mathcal{F}(0,T; L^2(\Gamma)) \), and \( g_2 \in L^2_\mathcal{F}(0,T; H^1(\Gamma)) \), the associated solution \((z, z_\Gamma)\) of \eqref{eqqgfr} satisfies that
		\begin{align}\label{Carlem1}
			\begin{aligned}
				&\, s \lambda^2 \mathbb{E} \int_Q \theta^2 \varphi z^2 \, dx \, dt + \lambda \mathbb{E} \int_Q \theta^2 |\nabla z|^2 \, dx \, dt + s \lambda \mathbb{E} \int_Q \theta^2 \varphi g_1^2 \, dx \, dt \\
				& + s \lambda^2 \mathbb{E} \int_\Sigma \theta^2 \varphi z_\Gamma^2 \,  d\sigma \, dt + \lambda \mathbb{E} \int_\Sigma \theta^2 |\nabla_\Gamma z_\Gamma|^2 \,  d\sigma \, dt + s \lambda \mathbb{E} \int_\Sigma \theta^2 \varphi g_2^2 \,  d\sigma \, dt \\
				& \leq C \, \mathbb{E} \bigg[ \theta^2(0) |\nabla z(0)|_{L^2(G)}^2 + s \lambda \varphi(T) \theta^2(T) |z(T)|_{L^2(G)}^2 + \theta^2(0) |\nabla_\Gamma z_\Gamma(0)|_{L^2(\Gamma)}^2  \\
				& \hspace{1.2cm}  + s \lambda \varphi(T) \theta^2(T) |z_\Gamma(T)|_{L^2(\Gamma)}^2 + \int_Q \theta^2 (f_1^2 + |\nabla g_1|^2) \, dx \, dt  \\
				& \hspace{1.2cm}  + \int_\Sigma \theta^2 (f_2^2 + |\nabla_\Gamma g_2|^2) \,  d\sigma \, dt \bigg],
			\end{aligned}
		\end{align}
		for all \( s \geq s_1 \) and all \( \lambda>0 \).
	\end{thm}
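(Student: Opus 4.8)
The plan is to apply the pointwise weighted identities of Proposition \ref{prop3.1} with $b=1$ to the solution $(z,z_\Gamma)$ of \eqref{eqqgfr}, setting $(h,h_\Gamma)=\theta(z,z_\Gamma)$, and then to integrate the volume identity \eqref{iden1} over $G$ and the boundary identity \eqref{iden2} over $\Gamma$, take expectations, and integrate in time over $[0,T]$. Since $\theta$ is $x$-independent one has $\nabla h=\theta\nabla z$, $\Delta h=\theta\Delta z$, $\partial_\nu h=\theta\partial_\nu z$, and on $\Gamma$ likewise $\Delta_\Gamma h_\Gamma=\theta\Delta_\Gamma z_\Gamma$; moreover the martingale part of $dh$ (resp.\ $dh_\Gamma$) is $\theta g_1\,dW$ (resp.\ $\theta g_2\,dW$), so that $(dh)^2=\theta^2 g_1^2\,dt$ and $(dh_\Gamma)^2=\theta^2 g_2^2\,dt$. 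After substituting $du-\Delta u\,dt=f_1\,dt+g_1\,dW$ and $du_\Gamma-\Delta_\Gamma u_\Gamma\,dt+\partial_\nu u\,dt=f_2\,dt+g_2\,dW$ into the left-hand sides of \eqref{iden1}--\eqref{iden2}, every stochastic integral against $dW$ has an adapted integrand and therefore drops out in expectation.

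The positive terms on the right-hand sides of the two identities become the left-hand side of \eqref{Carlem1}. Concretely, $\tfrac14\lambda|\nabla h|^2\,dt$ and $\tfrac14\lambda|\nabla_\Gamma h_\Gamma|^2\,dt$ produce the $\lambda\theta^2|\nabla z|^2$ and $\lambda\theta^2|\nabla_\Gamma z_\Gamma|^2$ terms; the combination $(\tfrac12-\tfrac14)s\lambda^2\varphi h^2$ and its boundary analogue give the $s\lambda^2\theta^2\varphi z^2$ and $s\lambda^2\theta^2\varphi z_\Gamma^2$ terms; and $\tfrac12 s\lambda\varphi(dh)^2-\tfrac18\lambda(dh)^2=(\tfrac12 s\varphi-\tfrac18)\lambda\theta^2 g_1^2\,dt$, which for $s$ large is comparable to $s\lambda\varphi\theta^2 g_1^2$, yields the $g_1^2$ term (and similarly $g_2^2$). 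The Itô corrections $-\tfrac12|d\nabla h|^2=-\tfrac12\theta^2|\nabla g_1|^2\,dt$ and $-\tfrac12|d\nabla_\Gamma h_\Gamma|^2=-\tfrac12\theta^2|\nabla_\Gamma g_2|^2\,dt$, moved to the right, give the $\theta^2|\nabla g_1|^2$ and $\theta^2|\nabla_\Gamma g_2|^2$ source terms. The exact differentials $\tfrac12 d(|\nabla h|^2-s\lambda\varphi h^2+\tfrac14\lambda h^2)$ integrate to endpoint contributions: at $t=0$ the surviving positive part is $\tfrac12\theta^2(0)|\nabla z(0)|^2$ while the $z(0)^2$ part combines to $(\tfrac18\lambda-\tfrac12 s\lambda\varphi(0))\theta^2(0)z^2(0)\le 0$ and is discarded, and at $t=T$ one keeps $\tfrac12 s\lambda\varphi(T)\theta^2(T)z^2(T)$ and drops the negative $|\nabla h(T)|^2$ and $h^2(T)$ contributions, which reproduces exactly the endpoint terms of \eqref{Carlem1}. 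Finally, the cross terms arising from $-\theta(\Delta h+s\lambda\varphi h)f_1+\tfrac14\lambda\theta h f_1$ and their boundary counterparts are handled by keeping the large term $(\Delta h+s\lambda\varphi h)^2\,dt$ on the right and applying Young's inequality, at the cost of the $\theta^2 f_1^2$ (resp.\ $\theta^2 f_2^2$) source terms.

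The crux of the argument is the coupling between the volume and boundary contributions through the normal derivative. Applying the Gauss--Green formula to the divergence term $-\operatorname{div}[\nabla h\,dh+\tfrac14\lambda h\nabla h\,dt]$ of \eqref{iden1} produces the boundary integral $\int_\Gamma[\partial_\nu h\,dh_\Gamma+\tfrac14\lambda h_\Gamma\partial_\nu h\,dt]\,d\sigma$, whose drift, computed from the boundary equation via $dh_\Gamma=\theta\,dz_\Gamma+s\lambda\varphi h_\Gamma\,dt$, equals $\partial_\nu h(\Delta_\Gamma h_\Gamma-\partial_\nu h+s\lambda\varphi h_\Gamma+\theta f_2)+\tfrac14\lambda h_\Gamma\partial_\nu h$. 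On the other hand, the extra terms in \eqref{iden2}, namely $(\Delta_\Gamma h_\Gamma+s\lambda\varphi h_\Gamma)\partial_\nu h\,dt+\tfrac14\lambda h_\Gamma\partial_\nu h\,dt$ (using $\theta\partial_\nu u=\partial_\nu h$ and reading the misprinted $\partial_\Gamma h$ as $\partial_\nu h$), carry exactly the opposite signs once the identities are added. The terms $\partial_\nu h\,\Delta_\Gamma h_\Gamma$, $s\lambda\varphi h_\Gamma\partial_\nu h$ and $\tfrac14\lambda h_\Gamma\partial_\nu h$ then cancel, and the entire boundary coupling collapses to $\int_\Gamma[-(\partial_\nu h)^2+\theta f_2\,\partial_\nu h]\,d\sigma\,dt\le\tfrac14\int_\Gamma\theta^2 f_2^2\,d\sigma\,dt$ by Young's inequality, which is harmless; meanwhile the $\operatorname{div}_\Gamma$ term of \eqref{iden2} integrates to zero over the closed surface $\Gamma$. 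This exact cancellation, which is precisely what the dynamic boundary condition is built to provide, is the step I expect to require the most care, together with fixing $s_1$ large enough that all the absorptions and sign determinations above hold simultaneously. A final technical point is regularity: the identities require $(z,z_\Gamma)$ to be $\mathbb{H}^2$-valued, whereas the mild solution lies only in $\mathbb{H}^1$, so one first proves \eqref{Carlem1} for sufficiently regular data and then passes to the limit using the estimate itself, whose constants are uniform.
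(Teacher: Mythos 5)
Your overall strategy coincides with the paper's own proof: apply Proposition \ref{prop3.1} with $b=1$ to both the volume and surface equations, integrate, take expectations, exploit the cancellation of the normal-derivative coupling, and handle endpoints, It\^o corrections and cross terms exactly as you describe (your treatment of those pieces matches the paper's \eqref{eqqE1}--\eqref{eqqE6} and \eqref{eqqEE21}--\eqref{eqqEE24}). However, the step you yourself single out as the crux --- the boundary coupling --- is mis-stated, and as written the estimate does not close. The point is that the first factor on the left-hand side of \eqref{iden2} is $du_\Gamma-\Delta_\Gamma u_\Gamma\,dt-\partial_\nu u\,dt$, which is \emph{not} the boundary operator of \eqref{eqqgfr}: by the equation it equals $(f_2-2\partial_\nu z)\,dt+g_2\,dW$, not $f_2\,dt+g_2\,dW$. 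Consequently, besides the two families of coupling terms you track (the Gauss--Green boundary integral from \eqref{iden1} and the extra terms on the right of \eqref{iden2}), a third one, $+2\bigl(\Delta_\Gamma h_\Gamma+s\lambda\varphi h_\Gamma\bigr)\partial_\nu h\,dt$, is hiding inside the left-hand side of \eqref{iden2}. Your claimed collapse of ``the entire boundary coupling'' to $-(\partial_\nu h)^2+\theta f_2\,\partial_\nu h$ accounts only for the first two families, and your final Young step treats the boundary cross term as if it were $-\theta(\Delta_\Gamma h_\Gamma+s\lambda\varphi h_\Gamma)f_2$, so the third term is simply dropped.

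This omission is not reparable by a routine extra application of Young's inequality, which is why it is a genuine gap rather than a slip. Writing $A=\Delta_\Gamma h_\Gamma+s\lambda\varphi h_\Gamma$, the full coupling left on the source side is $2A\partial_\nu h-(\partial_\nu h)^2+\theta f_2\,\partial_\nu h$, whose sharp pointwise bound is $\bigl(A+\tfrac12\theta f_2\bigr)^2=A^2+A\theta f_2+\tfrac14\theta^2 f_2^2$; since the only positive term available to absorb $A^2$ (the square $(\Delta_\Gamma h_\Gamma+s\lambda\varphi h_\Gamma)^2$ in \eqref{iden2}) carries coefficient exactly one, any separate estimate that produces an extra $A\theta f_2$ or $\varepsilon A^2$ destroys the absorption. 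What saves the argument --- and what the paper does in \eqref{eqqEE25} --- is to estimate the product $-\theta A(f_2-2\partial_\nu z)$ \emph{as a whole} by $A^2+\tfrac14\theta^2(f_2-2\partial_\nu z)^2$: expanding the square yields exactly $+(\partial_\nu h)^2-\theta f_2\,\partial_\nu h+\tfrac14\theta^2 f_2^2$, whose first two terms cancel identically against the leftover $-(\partial_\nu h)^2+\theta f_2\,\partial_\nu h$ coming from the Gauss--Green side; equivalently, keeping the source term $-\theta A f_2$ untouched lets it cancel the dangerous cross term $A\theta f_2$. With this grouping your proof becomes the paper's proof; without it, the decisive cancellation you assert does not hold term by term.
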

	\begin{proof}
		Using the identity \eqref{iden1} with \( b = 1 \), \( u = z \), and then integrating over \( Q \) and taking the expectation on both sides, we obtain that
		\begin{align}\label{a3.4}
			\begin{aligned}
				&-\mathbb{E}\int_Q \theta \left(\Delta h + s \lambda \varphi h\right) \left(dz - \Delta z \, dt\right) \, dx + \frac{1}{4} \lambda \mathbb{E} \int_Q \theta h \left(dz - \Delta z \, dt\right) \, dx \\
				&= -\mathbb{E} \int_Q \textnormal{div} \left[\nabla h \, dh + \frac{1}{4} \lambda h \nabla h \, dt \right] \, dx + \frac{1}{2} \mathbb{E} \int_Q d \left(|\nabla h|^2 - s \lambda \varphi h^2 + \frac{1}{4} \lambda h^2 \right) \, dx \\
				&\quad - \frac{1}{2} \mathbb{E} \int_Q |d \nabla h|^2 \, dx + \frac{1}{4} \lambda \mathbb{E} \int_Q |\nabla h|^2 \, dx \, dt + \frac{1}{4} s \lambda^2 \mathbb{E} \int_Q \varphi h^2 \, dx \, dt \\
				&\quad + \frac{1}{2} s \lambda \mathbb{E} \int_Q \varphi (dh)^2 \, dx - \frac{1}{8} \lambda \mathbb{E} \int_Q (dh)^2 \, dx + \mathbb{E} \int_Q (\Delta h + s \lambda \varphi h)^2 \, dx \, dt.
			\end{aligned}
		\end{align}
		Let us now estimate all the terms on the right-hand side of \eqref{a3.4}. We first have
		\begin{align}\label{eqqE1}
			\begin{aligned}
				- \mathbb{E} \int_Q \textnormal{div} \left[\nabla h \, dh + \frac{1}{4} \lambda h \nabla h \, dt \right] \, dx &= -\mathbb{E} \int_\Sigma \left(\partial_\nu h \, dh + \frac{1}{4} \lambda h \partial_\nu h \, dt \right) \,  d\sigma \\
				&= -s \lambda \mathbb{E} \int_\Sigma \theta^2 \varphi z_\Gamma \partial_\nu z \,  d\sigma \, dt - \mathbb{E} \int_\Sigma \theta^2 \Delta_\Gamma z_\Gamma \partial_\nu z \,  d\sigma \, dt \\
				&\quad + \mathbb{E} \int_\Sigma \theta^2 |\partial_\nu z|^2 \,  d\sigma \, dt - \mathbb{E} \int_\Sigma \theta^2 \partial_\nu z \, f_2 \,  d\sigma \, dt \\
				&\quad - \frac{1}{4} \lambda \mathbb{E} \int_\Sigma \theta^2 z_\Gamma \partial_\nu z \,  d\sigma \, dt.
			\end{aligned}
		\end{align}
		It is not difficult to see that there exists a large \( s_1>0 \) such that for all $s\geq s_1$
		\begin{align}\label{eqqE2}
			\begin{aligned}
				\frac{1}{2} \mathbb{E} \int_Q d \left(|\nabla h|^2 - s \lambda \varphi h^2 + \frac{1}{4} \lambda h^2 \right) \, dx &\geq -C \mathbb{E} \int_G \left[ |\nabla h(0)|^2 + s \lambda \varphi(T) h^2(T) \right] \, dx \\
				&\geq -C \mathbb{E} \int_G \left[ \theta^2(0) |\nabla z(0)|^2 + s \lambda \varphi(T) \theta^2(T) z^2(T) \right] \, dx.
			\end{aligned}
		\end{align}
		We also have 
		\begin{align}\label{eqqE3}
			-\frac{1}{2} \mathbb{E} \int_Q |d \nabla h|^2 \, dx + \frac{1}{4} \lambda \mathbb{E} \int_Q |\nabla h|^2 \, dx \, dt &= -\frac{1}{2} \mathbb{E} \int_Q \theta^2 |\nabla g_1|^2 \, dx \, dt + \frac{1}{4} \lambda \mathbb{E} \int_Q \theta^2 |\nabla z|^2 \, dx \, dt.
		\end{align}
		Taking a large \( s_1 \), we get
		\begin{align}\label{eqqE4}
			\begin{aligned}
				\frac{1}{2} s \lambda \mathbb{E} \int_Q \varphi (dh)^2 \, dx - \frac{1}{8} \lambda \mathbb{E} \int_Q (dh)^2 \, dx &= \frac{1}{2} s \lambda \mathbb{E} \int_Q \theta^2 \varphi g_1^2 \, dx \, dt - \frac{1}{8} \lambda \mathbb{E} \int_Q \theta^2 g_1^2 \, dx \, dt \\
				&\geq C s \lambda \mathbb{E} \int_Q \theta^2 \varphi g_1^2 \, dx \, dt.
			\end{aligned}
		\end{align}
		Let us now estimate the terms on the left-hand side of \eqref{a3.4}. We first note that
		\begin{align}\label{eqqE5}
			\begin{aligned}
				-\mathbb{E}\int_Q \theta \left(\Delta h + s \lambda \varphi h\right) \left(dz - \Delta z \, dt\right) \, dx &= -\mathbb{E}\int_Q \theta \left(\Delta h + s \lambda \varphi h\right) f_1 \, dx \, dt \\
				&\leq \mathbb{E}\int_Q \left(\Delta h + s \lambda \varphi h\right)^2 \, dx \, dt + \frac{1}{4} \mathbb{E}\int_Q \theta^2 f_1^2 \, dx \, dt.
			\end{aligned}
		\end{align}
		For a large \( s_1 \), we have that
		\begin{align}\label{eqqE6}
			\begin{aligned}
				\frac{1}{4} \lambda \mathbb{E}\int_Q \theta h \left(dz - \Delta z \, dt\right) \, dx &= \frac{1}{4} \lambda \mathbb{E}\int_Q \theta h f_1 \, dx \, dt \\
				&\leq \frac{1}{4} \lambda^2 \mathbb{E}\int_Q \theta^2 \varphi z^2 \, dx \, dt + \frac{1}{4} \mathbb{E}\int_Q \theta^2 f_1^2 \, dx \, dt.
			\end{aligned}
		\end{align}
		Combining \eqref{a3.4}, \eqref{eqqE1}, \eqref{eqqE2}, \eqref{eqqE3}, \eqref{eqqE4}, \eqref{eqqE5}, \eqref{eqqE6}, and taking a large enough \( s_1 \), we deduce 
		\begin{align}\label{estimm1}
			\begin{aligned}
				& C s \lambda^2 \mathbb{E}\int_Q \theta^2 \varphi z^2 \, dx \, dt + C \lambda \mathbb{E}\int_Q \theta^2 |\nabla z|^2 \, dx \, dt + C s \lambda \mathbb{E}\int_Q \theta^2 \varphi g_1^2 \, dx \, dt \\
				&\leq C \left[ \mathbb{E}\int_G \left[\theta^2(0) |\nabla z(0)|^2 + s \lambda \varphi(T) \theta^2(T) z^2(T)\right] \, dx \right] \\
				&\quad + \mathbb{E}\int_Q \theta^2 f_1^2 \, dx \, dt + \mathbb{E}\int_Q \theta^2 |\nabla g_1|^2 \, dx \, dt \\
				&\quad + s \lambda \mathbb{E}\int_\Sigma \theta^2 \varphi z_\Gamma \partial_\nu z \,  d\sigma \, dt + \mathbb{E}\int_\Sigma \theta^2 \Delta_\Gamma z_\Gamma \partial_\nu z \,  d\sigma \, dt\\
				&\quad  - \mathbb{E}\int_\Sigma \theta^2 |\partial_\nu z|^2 \,  d\sigma \, dt + \mathbb{E}\int_\Sigma \theta^2 \partial_\nu z f_2 \,  d\sigma \, dt + \frac{1}{4} \lambda \mathbb{E}\int_\Sigma \theta^2 z_\Gamma \partial_\nu z \,  d\sigma \, dt.
			\end{aligned}
		\end{align}
		On the other hand, applying the identity \eqref{iden2} with \( b=1 \), \( u_\Gamma=z_\Gamma \), and then integrating over \( \Sigma \) and taking the expectation on both sides, we get that
		\begin{align}\label{a.12}
			\begin{aligned}
				&-\mathbb{E}\int_\Sigma \theta \left(\Delta_\Gamma h_\Gamma + s \lambda \varphi h_\Gamma\right) \left(dz_\Gamma - \Delta_\Gamma z_\Gamma \, dt - \partial_\nu z \, dt\right) \,  d\sigma \\
				&+ \frac{1}{4} \lambda \mathbb{E}\int_\Sigma \theta h_\Gamma \left(dz_\Gamma - \Delta_\Gamma z_\Gamma \, dt + \partial_\nu z \, dt\right) \,  d\sigma \\
				&= -\mathbb{E}\int_\Sigma \textnormal{div}_\Gamma \left[\nabla_\Gamma h_\Gamma \, dh_\Gamma + \frac{1}{4} \lambda h_\Gamma \nabla_\Gamma h_\Gamma \, dt\right] \,  d\sigma \\
				&\quad + \frac{1}{2} \mathbb{E}\int_\Sigma d \left(|\nabla_\Gamma h_\Gamma|^2 - s \lambda \varphi h_\Gamma^2 + \frac{1}{4} \lambda h_\Gamma^2\right) \,  d\sigma - \frac{1}{2} \mathbb{E}\int_\Sigma |d \nabla_\Gamma h_\Gamma|^2 \,  d\sigma\\
				&\quad  + \frac{1}{4} \lambda \mathbb{E}\int_\Sigma |\nabla_\Gamma h_\Gamma|^2 \,  d\sigma \, dt + \frac{1}{4} s \lambda^2 \mathbb{E}\int_\Sigma \varphi h_\Gamma^2 \,  d\sigma \, dt\\
				&\quad  + \frac{1}{2} s \lambda \mathbb{E}\int_\Sigma \varphi (dh_\Gamma)^2 \,  d\sigma - \frac{1}{8} \lambda \mathbb{E}\int_\Sigma (dh_\Gamma)^2 \,  d\sigma\\
				&\quad  + \mathbb{E}\int_\Sigma (\Delta_\Gamma h_\Gamma + s \lambda \varphi h_\Gamma)^2 \,  d\sigma \, dt + \mathbb{E}\int_\Sigma \theta^2 \Delta_\Gamma z_\Gamma \, \partial_\nu z \,  d\sigma \, dt\\
				&\quad  + s \lambda \mathbb{E}\int_\Sigma \theta^2 \varphi z_\Gamma \partial_\nu z \,  d\sigma \, dt + \frac{1}{4} \lambda \mathbb{E}\int_\Sigma \theta^2 z_\Gamma \partial_\nu z \,  d\sigma \, dt.
			\end{aligned}
		\end{align}
		Let us estimate the terms on the right-hand side of \eqref{a.12}. We first note that
		\begin{align}\label{eqqEE21}
			-\mathbb{E}\int_\Sigma \textnormal{div}_\Gamma \left[\nabla_\Gamma h_\Gamma \, dh_\Gamma + \frac{1}{4} \lambda h_\Gamma \nabla_\Gamma h_\Gamma \, dt\right] \,  d\sigma = 0.
		\end{align}
		Taking a large $s_1$, we have 
		\begin{align}\label{eqqEE22}
			\begin{aligned}
				&\,\frac{1}{2} \mathbb{E}\int_\Sigma d \left(|\nabla_\Gamma h_\Gamma|^2 - s \lambda \varphi h_\Gamma^2 + \frac{1}{4} \lambda h_\Gamma^2\right) \,  d\sigma\\ &\geq -C \mathbb{E}\int_\Gamma \left[|\nabla_\Gamma h_\Gamma(0)|^2 + s \lambda \varphi(T) h_\Gamma^2(T)\right] \,  d\sigma \\
				&\geq -C \mathbb{E}\int_\Gamma \left[\theta^2(0) |\nabla_\Gamma z_\Gamma(0)|^2 + s \lambda \varphi(T) \theta^2(T) z_\Gamma^2(T)\right] \,  d\sigma.
			\end{aligned}
		\end{align}
		See that
		\begin{align}\label{eqqEE23}
			\begin{aligned}
				&-\frac{1}{2} \mathbb{E}\int_\Sigma |d \nabla_\Gamma h_\Gamma|^2 \,  d\sigma + \frac{1}{4} \lambda \mathbb{E}\int_\Sigma |\nabla_\Gamma h_\Gamma|^2 \,  d\sigma \, dt \\&= -\frac{1}{2} \mathbb{E}\int_\Sigma \theta^2 |\nabla_\Gamma g_2|^2 \,  d\sigma \, dt + \frac{1}{4} \lambda \mathbb{E}\int_\Sigma \theta^2 |\nabla_\Gamma z_\Gamma|^2 \,  d\sigma \, dt.
			\end{aligned}
		\end{align}
		For a large \( s_1 \), it holds that
		\begin{align}\label{eqqEE24}
			\begin{aligned}
				\frac{1}{2} s \lambda \mathbb{E}\int_\Sigma \varphi (dh_\Gamma)^2 \,  d\sigma - \frac{1}{8} \lambda \mathbb{E}\int_\Sigma (dh_\Gamma)^2 \,  d\sigma &= \frac{1}{2} s \lambda \mathbb{E}\int_\Sigma \theta^2 \varphi g_2^2 \,  d\sigma \, dt - \frac{1}{8} \lambda \mathbb{E}\int_\Sigma \theta^2 g_2^2 \,  d\sigma \, dt \\
				&\geq C s \lambda \mathbb{E}\int_\Sigma \theta^2 \varphi g_2^2 \,  d\sigma \, dt.
			\end{aligned}
		\end{align}
		To estimate the terms on the left-hand side of \eqref{a.12}, we proceed as in \eqref{eqqE5} and \eqref{eqqE6}, we obtain
		\begin{align}\label{eqqEE25}
			\begin{aligned}
				&-\mathbb{E}\int_\Sigma \theta\left(\Delta_\Gamma h_\Gamma + s\lambda\varphi h_\Gamma\right)\left(dz_\Gamma -\Delta_\Gamma z_\Gamma \, dt - \partial_\nu z \, dt\right) \,  d\sigma \\
				&\leq \mathbb{E}\int_\Sigma \left(\Delta_\Gamma h_\Gamma + s\lambda\varphi h_\Gamma\right)^2 \,  d\sigma \, dt + \frac{1}{4}\mathbb{E}\int_\Sigma \theta^2 (-2\partial_\nu z + f_2)^2 \,  d\sigma \, dt \\
				&= \mathbb{E}\int_\Sigma \left(\Delta_\Gamma h_\Gamma + s\lambda\varphi h_\Gamma\right)^2 \,  d\sigma \, dt + \mathbb{E}\int_\Sigma \theta^2 |\partial_\nu z|^2 \,  d\sigma \, dt - \mathbb{E}\int_\Sigma \theta^2 \partial_\nu z \, f_2 \,  d\sigma \, dt \\
				&\quad + \frac{1}{4} \mathbb{E}\int_\Sigma \theta^2 f_2^2 \,  d\sigma \, dt.
			\end{aligned}
		\end{align}
		By Young's inequality, we have that
		\begin{align}\label{eqqEE26}
			\begin{aligned}
				\frac{1}{4}\lambda \mathbb{E}\int_\Sigma \theta h_\Gamma \left(dz_\Gamma - \Delta_\Gamma z_\Gamma \, dt + \partial_\nu z \, dt\right) \,  d\sigma &\leq \frac{1}{4}\lambda^2 \mathbb{E}\int_\Sigma \theta^2 \varphi z_\Gamma^2 \,  d\sigma \, dt + \frac{1}{4} \mathbb{E}\int_\Sigma \theta^2 f_2^2 \,  d\sigma \, dt.
			\end{aligned}
		\end{align}
		Combining \eqref{a.12}, \eqref{eqqEE21}, \eqref{eqqEE22}, \eqref{eqqEE23}, \eqref{eqqEE24}, \eqref{eqqEE25}, and \eqref{eqqEE26}, and choosing \( s_1 \) sufficiently large, we conclude that
		\begin{align}\label{estimm2}
			\begin{aligned}
				& Cs\lambda^2 \mathbb{E}\int_\Sigma \theta^2 \varphi z_\Gamma^2 \,  d\sigma \, dt + C\lambda \mathbb{E}\int_\Sigma \theta^2 |\nabla_\Gamma z_\Gamma|^2 \,  d\sigma \, dt + Cs\lambda \mathbb{E}\int_\Sigma \theta^2 \varphi g_2^2 \,  d\sigma \, dt \\
				&\leq C\left[\mathbb{E}\int_\Gamma \left[\theta^2(0) |\nabla_\Gamma z_\Gamma(0)|^2 + s\lambda \varphi(T) \theta^2(T) z_\Gamma^2(T)\right] \,  d\sigma \right] \\
				&\quad + \mathbb{E}\int_\Sigma \theta^2 f_2^2 \,  d\sigma \, dt + \mathbb{E}\int_\Sigma \theta^2 |\nabla_\Gamma g_2|^2 \,  d\sigma \, dt - \mathbb{E}\int_\Sigma \theta^2 \Delta_\Gamma z_\Gamma \, \partial_\nu z \,  d\sigma \, dt \\
				&\quad - s\lambda \mathbb{E}\int_\Sigma \theta^2 \varphi z_\Gamma \partial_\nu z \,  d\sigma \, dt - \frac{1}{4} \lambda \mathbb{E}\int_\Sigma \theta^2 z_\Gamma \partial_\nu z \,  d\sigma \, dt + \mathbb{E}\int_\Sigma \theta^2 |\partial_\nu z|^2 \,  d\sigma \, dt \\
				&\quad - \mathbb{E}\int_\Sigma \theta^2 \partial_\nu z \, f_2 \,  d\sigma \, dt.
			\end{aligned}
		\end{align}
		Finally, adding \eqref{estimm1} and \eqref{estimm2}, we end up with
		\begin{align*}
			\begin{aligned}
				&s\lambda^2 \mathbb{E}\int_Q \theta^2 \varphi z^2 \, dx \, dt + \lambda \mathbb{E}\int_Q \theta^2 |\nabla z|^2 \, dx \, dt + s\lambda \mathbb{E}\int_Q \theta^2 \varphi g_1^2 \, dx \, dt \\
				&+ s\lambda^2 \mathbb{E}\int_\Sigma \theta^2 \varphi z_\Gamma^2 \,  d\sigma \, dt + \lambda \mathbb{E}\int_\Sigma \theta^2 |\nabla_\Gamma z_\Gamma|^2 \,  d\sigma \, dt + s\lambda \mathbb{E}\int_\Sigma \theta^2 \varphi g_2^2 \,  d\sigma \, dt \\
				&\leq C \bigg[ \mathbb{E}\int_G \left[\theta^2(0) |\nabla z(0)|^2 + s\lambda \varphi(T) \theta^2(T) z^2(T)\right] \, dx \\
				&\hspace{0.8cm} + \mathbb{E}\int_Q \theta^2 f_1^2 \, dx \, dt + \mathbb{E}\int_Q \theta^2 |\nabla g_1|^2 \, dx \, dt \\
				&\hspace{0.8cm} + \mathbb{E}\int_\Gamma \left[\theta^2(0) |\nabla_\Gamma z_\Gamma(0)|^2 + s\lambda \varphi(T) \theta^2(T) z_\Gamma^2(T)\right] \,  d\sigma \\
				&\hspace{0.8cm} + \mathbb{E}\int_\Sigma \theta^2 f_2^2 \,  d\sigma \, dt + \mathbb{E}\int_\Sigma \theta^2 |\nabla_\Gamma g_2|^2 \,  d\sigma \, dt \bigg],
			\end{aligned}
		\end{align*}
		which implies the desired Carleman estimate \eqref{Carlem1}. This concludes the proof of Theorem \ref{thmmS33.1}.
	\end{proof}
	\subsection{Carleman Estimate for Backward Stochastic Parabolic Equations}
	
	Consider the following general backward stochastic parabolic equation
	\begin{equation}\label{eqqgbc}
		\begin{cases}
			dz + \Delta z \, dt = F_1 \, dt + Z \, dW(t) & \textnormal{in } Q, \\
			d z_{\Gamma} + \Delta_\Gamma z_\Gamma \, dt - \partial_\nu z \, dt = F_2 \, dt + \widehat{Z} \, dW(t) & \textnormal{on } \Sigma, \\
			z_{\Gamma} = z|_\Gamma & \textnormal{on } \Sigma, \\
			(z, z_\Gamma)|_{t=T} = (z_T, z_{\Gamma, T}) & \textnormal{in } G \times \Gamma,
		\end{cases}
	\end{equation}
	where \((z_T, z_{\Gamma, T}) \in L^2_{\mathcal{F}_T}(\Omega; \mathbb{L}^2)\) is the terminal state, \(F_1 \in L^2_\mathcal{F}(0, T; L^2(G))\), and \(F_2 \in L^2_\mathcal{F}(0, T; L^2(\Gamma))\). We have the following Carleman estimate.
	
	\begin{thm}\label{thmmS32.1}
		Assume that \((z_T, z_{\Gamma, T}) = (0, 0)\). Then, there exists a large \(s_2>0\) such that for any \(F_1 \in L^2_\mathcal{F}(0, T; L^2(G))\) and \(F_2 \in L^2_\mathcal{F}(0, T; L^2(\Gamma))\), the corresponding solution \((z, z_\Gamma, Z, \widehat{Z})\) of \eqref{eqqgbc} satisfies that
		\begin{align}\label{carl2}
			\begin{aligned}
				& s\lambda^2 \mathbb{E} \int_Q \theta^2 \varphi z^2 \, dx \, dt + \lambda \mathbb{E} \int_Q \theta^2 |\nabla z|^2 \, dx \, dt + s\lambda \mathbb{E} \int_Q \theta^2 \varphi Z^2 \, dx \, dt \\
				& + s\lambda^2 \mathbb{E} \int_\Sigma \theta^2 \varphi z_\Gamma^2 \,  d\sigma \, dt + \lambda \mathbb{E} \int_\Sigma \theta^2 |\nabla_\Gamma z_\Gamma|^2 \,  d\sigma \, dt + s\lambda \mathbb{E} \int_\Sigma \theta^2 \varphi \widehat{Z}^2 \,  d\sigma \, dt \\
				& \leq C \left[ \mathbb{E} \int_Q \theta^2 F_1^2 \, dx \, dt + \mathbb{E} \int_\Sigma \theta^2 F_2^2 \,  d\sigma \, dt \right],
			\end{aligned}
		\end{align}
		for all \(s \geq s_2\) and all \(\lambda>0\).
	\end{thm}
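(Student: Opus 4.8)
The plan is to mirror the proof of Theorem \ref{thmmS33.1}, but to apply the weighted identities of Proposition \ref{prop3.1} with the opposite sign $b=-1$, which is precisely the sign matching the backward operators in \eqref{eqqgbc}. First I would apply identity \eqref{iden1} with $b=-1$ and $u=z$: since $dz+\Delta z\,dt=F_1\,dt+Z\,dW(t)$, the operator $du-b\Delta u\,dt$ becomes exactly $dz+\Delta z\,dt$. Integrating over $Q$, taking expectation, and using the divergence theorem on the term $-b\,\textnormal{div}[\nabla h\,dh+\tfrac14 b\lambda h\nabla h\,dt]$ produces, as in \eqref{eqqE1}, boundary integrals over $\Sigma$ carrying the normal derivative $\partial_\nu z$. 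Symmetrically, I would apply \eqref{iden2} with $b=-1$ and $u_\Gamma=z_\Gamma$, matching $dz_\Gamma+\Delta_\Gamma z_\Gamma\,dt-\partial_\nu z\,dt=F_2\,dt+\widehat{Z}\,dW(t)$; here the surface divergence term integrates to zero because $\Gamma$ is a closed manifold, exactly as in \eqref{eqqEE21}.

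The decisive bookkeeping is where the choice $b=-1$ helps. In \eqref{iden1} the quadratic-variation contributions $-\tfrac12 b|d\nabla h|^2+\tfrac14 b^2\lambda|\nabla h|^2\,dt$ and $\tfrac12 s\lambda\varphi(dh)^2-\tfrac18 b\lambda(dh)^2$ become $\tfrac12\theta^2|\nabla Z|^2\,dt+\tfrac14\lambda\theta^2|\nabla z|^2\,dt$ and $(\tfrac12 s\lambda\varphi+\tfrac18\lambda)\theta^2 Z^2\,dt$, all nonnegative; in particular they yield the good term $s\lambda\,\mathbb{E}\int_Q\theta^2\varphi Z^2$ (and, on $\Sigma$, $s\lambda\,\mathbb{E}\int_\Sigma\theta^2\varphi\widehat{Z}^2$), while the extra $\tfrac12\theta^2|\nabla Z|^2$ sits on the good side and may simply be discarded. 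Crucially, the time-boundary term $\tfrac12\mathbb{E}\int_G d(b|\nabla h|^2-s\lambda\varphi h^2+\tfrac14 b\lambda h^2)\,dx$ now integrates, using the zero terminal data $z(T,\cdot)=0$ (hence $\nabla z(T,\cdot)=0$) and $z_\Gamma(T,\cdot)=0$, to a \emph{nonnegative} quantity evaluated at $t=0$ with a favorable sign; thus, unlike \eqref{eqqE2} and \eqref{eqqEE22}, it drops out entirely and no initial or terminal data term survives on the right-hand side. The operator terms on the left, which now involve $F_1$ and $F_2$, are bounded by Young's inequality exactly as in \eqref{eqqE5}, \eqref{eqqE6}, \eqref{eqqEE25}, and \eqref{eqqEE26}, absorbing the squares $(\Delta h+s\lambda\varphi h)^2$ and their surface analogues, while the lower-order term $\tfrac14\lambda^2\theta^2\varphi z^2$ is absorbed by $Cs\lambda^2\theta^2\varphi z^2$ for $s$ large.

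This produces a volume estimate and a surface estimate, the analogues of \eqref{estimm1} and \eqref{estimm2}. The final and most delicate step is to add them: the coupling integrals over $\Sigma$ carrying $\partial_\nu z$ — namely the $s\lambda\varphi z_\Gamma\partial_\nu z$, $\Delta_\Gamma z_\Gamma\partial_\nu z$, $|\partial_\nu z|^2$, and $\lambda z_\Gamma\partial_\nu z$ terms generated separately by the two identities — must cancel exactly, just as they do in the forward case when \eqref{estimm1} and \eqref{estimm2} are summed. Verifying this cancellation, keeping careful track of the signs introduced by $b=-1$ and by the mismatched signs of $\partial_\nu u$ in the two brackets of \eqref{iden2}, is the main obstacle; once it is carried out, collecting the surviving good terms yields \eqref{carl2} for all $s\ge s_2$ and all $\lambda>0$. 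I would also remark that the application of It\^o's formula underlying Proposition \ref{prop3.1} is justified by first working with sufficiently regular solutions and passing to the limit, so that the nonnegative term $\tfrac12\theta^2|\nabla Z|^2$ appearing on the good side is automatically finite and the argument is rigorous.
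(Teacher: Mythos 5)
Your proposal is correct and takes essentially the same route as the paper: apply the weighted identities \eqref{iden1} and \eqref{iden2} with $b=-1$ to \eqref{eqqgbc}, keep the now-favorable quadratic-variation terms, bound the $F_1$, $F_2$ contributions by Young's inequality, and add the volume and surface estimates; the cancellation you flag as ``the main obstacle'' does go through exactly as in the forward case, since the coupling integrals $s\lambda\theta^2\varphi z_\Gamma\partial_\nu z$, $\theta^2\Delta_\Gamma z_\Gamma\partial_\nu z$, $\theta^2|\partial_\nu z|^2$, $\theta^2\partial_\nu z\,F_2$ and $\tfrac14\lambda\theta^2 z_\Gamma\partial_\nu z$ appear with opposite signs in the two intermediate estimates (the paper's \eqref{ESTIMATEE1} and \eqref{ESTIMATEE2}) and annihilate upon addition. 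The one genuine deviation is your treatment of the time-boundary term: you discard it outright via $z(T)=0\Rightarrow\nabla z(T)=0$ (and similarly on $\Gamma$), whereas the paper retains $\mathbb{E}\int_G\theta^2(T)|\nabla z(T)|^2\,dx$ and $\mathbb{E}\int_\Gamma\theta^2(T)|\nabla_\Gamma z_\Gamma(T)|^2\,d\sigma$ and only removes them at the end through the maximal-regularity bound \eqref{3.41estim}; under the hypothesis $(z_T,z_{\Gamma,T})=(0,0)$ your shortcut is legitimate and even spares that appeal to well-posedness, so this is a harmless simplification rather than a gap.
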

	\begin{proof}
		Using the identity \eqref{iden1} with \( b = -1 \) and \( u = z \), we obtain that
		\begin{align}\label{abac3.4}
			\begin{aligned}
				&-\mathbb{E}\int_Q \theta \left(-\Delta h + s\lambda \varphi h\right) \left(dz + \Delta z \, dt\right) \, dx - \frac{1}{4}\lambda \mathbb{E}\int_Q \theta h \left(dz + \Delta z \, dt\right) \, dx \\
				&= \mathbb{E}\int_Q \textnormal{div} \left[\nabla h \, dh - \frac{1}{4}\lambda h \nabla h \, dt \right] \, dx - \frac{1}{2} \mathbb{E}\int_Q d \left(|\nabla h|^2 + s\lambda \varphi h^2 + \frac{1}{4}\lambda h^2 \right) \, dx \\
				&\quad + \frac{1}{2} \mathbb{E}\int_Q |d\nabla h|^2 \, dx + \frac{1}{4}\lambda \mathbb{E}\int_Q |\nabla h|^2 \, dx \, dt + \frac{3}{4}s\lambda^2 \mathbb{E}\int_Q \varphi h^2 \, dx \, dt \\
				&\quad + \frac{1}{2}s\lambda \mathbb{E}\int_Q \varphi (dh)^2 \, dx + \frac{1}{8}\lambda \mathbb{E}\int_Q (dh)^2 \, dx + \mathbb{E}\int_Q \left(-\Delta h + s\lambda \varphi h\right)^2 \, dx \, dt.
			\end{aligned}
		\end{align}
		Let us first estimate the terms on the right-hand side of \eqref{abac3.4}. Notice that
		\begin{align}\label{abacc1}
			\begin{aligned}
				\mathbb{E}\int_Q \textnormal{div} \left[\nabla h \, dh - \frac{1}{4}\lambda h \nabla h \, dt \right] \, dx &= s\lambda \mathbb{E}\int_\Sigma \theta^2 \varphi z_\Gamma \partial_\nu z \,  d\sigma \, dt - \mathbb{E}\int_\Sigma \theta^2 \Delta_\Gamma z_\Gamma \partial_\nu z \,  d\sigma \, dt \\
				&\quad + \mathbb{E}\int_\Sigma \theta^2 |\partial_\nu z|^2 \,  d\sigma \, dt + \mathbb{E}\int_\Sigma \theta^2 \partial_\nu z \, F_2 \,  d\sigma \, dt \\
				&\quad - \frac{1}{4}\lambda \mathbb{E}\int_\Sigma \theta^2 z_\Gamma \partial_\nu z \,  d\sigma \, dt.
			\end{aligned}
		\end{align}
		It is not difficult to see that there exists a large $s_2>0$ such that for all \( s\geq s_2 \)
		\begin{align}\label{abacc2}
			-\frac{1}{2}\mathbb{E}\int_Q d \left(|\nabla h|^2 + s\lambda \varphi h^2 + \frac{1}{4}\lambda h^2 \right) \, dx \geq -C \mathbb{E}\int_G \theta^2(T) |\nabla z(T)|^2 \, dx.
		\end{align}
		We also note that
		\begin{align}\label{abacc3}
			\begin{aligned}
				\frac{1}{2}\mathbb{E}\int_Q |d\nabla h|^2 \, dx \, dt + \frac{1}{4}\lambda \mathbb{E}\int_Q |\nabla h|^2 \, dx \, dt &\geq \frac{1}{4}\lambda \mathbb{E}\int_Q \theta^2 |\nabla z|^2 \, dx \, dt,
			\end{aligned}
		\end{align}
		and
		\begin{align}\label{abacc4}
			\begin{aligned}
				\frac{1}{2}s\lambda \mathbb{E}\int_Q \varphi (dh)^2 \, dx + \frac{1}{8}\lambda \mathbb{E}\int_Q (dh)^2 \, dx &= \frac{1}{2}s\lambda \mathbb{E}\int_Q \theta^2 \varphi Z^2 \, dx \, dt + \frac{1}{8}\lambda \mathbb{E}\int_Q \theta^2 Z^2 \, dx \, dt \\
				&\geq \frac{1}{2}s\lambda \mathbb{E}\int_Q \theta^2 \varphi Z^2 \, dx \, dt.
			\end{aligned}
		\end{align}
		For the terms on the left-hand side of \eqref{abac3.4}, we have that
		\begin{align}\label{abacc5}
			\begin{aligned}
				-\mathbb{E}\int_Q \theta \left(-\Delta h + s\lambda \varphi h \right) \left(dz + \Delta z \, dt\right) \, dx &= -\mathbb{E}\int_Q \theta \left(-\Delta h + s\lambda \varphi h \right) F_1 \, dx \, dt \\
				&\leq \mathbb{E}\int_Q \left(-\Delta h + s\lambda \varphi h\right)^2 \, dx \, dt + \frac{1}{4}\mathbb{E}\int_Q \theta^2 F_1^2 \, dx \, dt.
			\end{aligned}
		\end{align}
		Similarly to \eqref{abacc5}, we get
		\begin{align}\label{abacc6}
			\begin{aligned}
				-\frac{1}{4}\lambda \mathbb{E}\int_Q \theta h \left(dz + \Delta z \, dt\right) \, dx &= -\frac{1}{4}\lambda \mathbb{E}\int_Q \theta h F_1 \, dx \, dt \\
				&\leq \frac{1}{4}\lambda^2 \mathbb{E}\int_Q \theta^2 \varphi z^2 \, dx \, dt + \frac{1}{4} \mathbb{E}\int_Q \theta^2 F_1^2 \, dx \, dt.
			\end{aligned}
		\end{align}
		Combining \eqref{abac3.4}, \eqref{abacc1}, \eqref{abacc2}, \eqref{abacc3}, \eqref{abacc4}, \eqref{abacc5} and  \eqref{abacc6}, and choosing a large enough $s_2$, we conclude that
		\begin{align}\label{ESTIMATEE1}
			\begin{aligned}
				&Cs\lambda^2\mathbb{E}\int_Q \theta^2\varphi z^2 \,\, dx\,dt + C\lambda\mathbb{E}\int_Q \theta^2|\nabla z|^2 \,\, dx\,dt+Cs\lambda\mathbb{E}\int_Q\theta^2\varphi Z^2\,\, dx\,dt\\
				&\leq C\mathbb{E}\int_G \theta^2(T)|\nabla z(T)|^2\, dx+\mathbb{E}\int_Q\theta^2 F_1^2 \,\, dx\,dt\\
				&\quad-s\lambda\mathbb{E}\int_\Sigma \theta^2\varphi z_\Gamma \partial_\nu z\,  d\sigma\,dt+\mathbb{E}\int_\Sigma \theta^2\Delta_\Gamma z_\Gamma \partial_\nu z\,  d\sigma\,dt-\mathbb{E}\int_\Sigma \theta^2|\partial_\nu z|^2 \,  d\sigma\,dt\\
				&\quad-\mathbb{E}\int_\Sigma \theta^2\partial_\nu z F_2 \,  d\sigma\,dt+\frac{1}{4}\lambda\mathbb{E}\int_\Sigma \theta^2 z_\Gamma \partial_\nu z \,  d\sigma\,dt.
			\end{aligned}
		\end{align}
		On the other hand, applying the identity \eqref{iden2} with $b=-1$ and $u_\Gamma=z_\Gamma$,  we get that
		\begin{align}\label{abacca12}
			\begin{aligned}
				&-\mathbb{E}\int_\Sigma \theta\left(-\Delta_\Gamma h_\Gamma + s\lambda\varphi h_\Gamma\right)\left(dz_\Gamma +\Delta_\Gamma z_\Gamma\, dt+\partial_\nu z\,dt\right)\,  d\sigma\\
				&- \frac{1}{4}\lambda \mathbb{E}\int_\Sigma \theta h_\Gamma\left(dz_\Gamma +\Delta_\Gamma z_\Gamma\, dt-\partial_\nu z\,dt\right)\,  d\sigma\\
				&=\mathbb{E}\int_\Sigma \textnormal{div}_\Gamma\left[\nabla_\Gamma h_\Gamma\,dh_\Gamma-\frac{1}{4}\lambda h_\Gamma\nabla_\Gamma h_\Gamma\,dt\right]\,  d\sigma\\
				&\quad\,-\frac{1}{2}\mathbb{E}\int_\Sigma d\left(|\nabla_\Gamma h_\Gamma|^2+s\lambda\varphi h_\Gamma^2+\frac{1}{4}\lambda h_\Gamma^2\right)\,  d\sigma+\frac{1}{2}\mathbb{E}\int_\Sigma |d\nabla_\Gamma h_\Gamma|^2\,  d\sigma
				\\
				&\quad\,+\frac{1}{4}\lambda\mathbb{E}\int_\Sigma |\nabla_\Gamma h_\Gamma|^2\, d\sigma\,dt+\frac{3}{4}s\lambda^2\mathbb{E}\int_\Sigma \varphi h_\Gamma^2 \, d\sigma\,dt+\frac{1}{2}s\lambda\mathbb{E}\int_\Sigma \varphi (dh_\Gamma)^2\,  d\sigma\\
				&\quad\,+\frac{1}{8}\lambda\mathbb{E}\int_\Sigma (dh_\Gamma)^2\,  d\sigma+\mathbb{E}\int_\Sigma (-\Delta_\Gamma h_\Gamma + s\lambda\varphi h_\Gamma)^2\,\,d\sigma\,dt\\
				&\quad\,+\mathbb{E}\int_\Sigma \theta^2 \Delta_\Gamma z_\Gamma \,\partial_\nu z  \,  d\sigma\,dt-s\lambda\mathbb{E}\int_\Sigma \theta^2 \varphi z_\Gamma \partial_\nu z \,  d\sigma\,dt+\frac{1}{4}\lambda\mathbb{E}\int_\Sigma \theta^2 z_\Gamma\partial_\nu z \,  d\sigma\,dt.
			\end{aligned}
		\end{align}
		For the right hand side of \eqref{abacca12}. We first have that
		\begin{align}\label{aineq1}
			\mathbb{E}\int_\Sigma \textnormal{div}_\Gamma\left[\nabla_\Gamma h_\Gamma\,dh_\Gamma-\frac{1}{4}\lambda h_\Gamma\nabla_\Gamma h_\Gamma\,dt\right]\,  d\sigma=0.
		\end{align}
		It is easy to see that for a large enough $s_2$
		\begin{align}\label{aineq2}
			-\frac{1}{2}\mathbb{E}\int_\Sigma d\left(|\nabla_\Gamma h_\Gamma|^2+s\lambda\varphi h_\Gamma^2+\frac{1}{4}\lambda h_\Gamma^2\right)\, dx \geq -C\mathbb{E}\int_\Gamma \theta^2(T)|\nabla_\Gamma z_\Gamma(T)|^2 \,  d\sigma.
		\end{align}
		We also have that
		\begin{align}\label{aineq3}
			\begin{aligned}
				\frac{1}{2}\mathbb{E}\int_\Sigma|d\nabla_\Gamma h_\Gamma|^2\, d\sigma\,dt+\frac{1}{4}\lambda\mathbb{E}\int_\Sigma|\nabla_\Gamma h_\Gamma|^2\, d\sigma\,dt\geq \frac{1}{4}\lambda\mathbb{E}\int_\Sigma\theta^2|\nabla_\Gamma z_\Gamma|^2\, d\sigma\,dt,
			\end{aligned}
		\end{align}
		and
		\begin{align}\label{aineq4}
			\begin{aligned}
				\frac{1}{2}s\lambda\mathbb{E}\int_\Sigma\varphi (dh_\Gamma)^2\,  d\sigma+\frac{1}{8}\lambda\mathbb{E}\int_\Sigma(dh_\Gamma)^2\,  d\sigma\geq \frac{1}{2}s\lambda\mathbb{E}\int_\Sigma\theta^2\varphi \widehat{Z}^2\, d\sigma\,dt.
			\end{aligned}
		\end{align}
		We now estimate the terms on the left hand side of \eqref{abacca12}. From equation \eqref{eqqgbc}, we have 
		\begin{align}\label{aineq5}
			\begin{aligned}
				&-\mathbb{E}\int_\Sigma \theta\left(-\Delta_\Gamma h_\Gamma + s\lambda\varphi h_\Gamma\right)\left(dz_\Gamma +\Delta_\Gamma z_\Gamma\, dt+\partial_\nu z\,dt\right)\, d\sigma \\
				&\leq \mathbb{E}\int_\Sigma \left(-\Delta_\Gamma h_\Gamma + s\lambda\varphi h_\Gamma\right)^2 \,  d\sigma\,dt+\frac{1}{4}\mathbb{E}\int_\Sigma \theta^2 (2\partial_\nu z+F_2)^2 \,  d\sigma\,dt\\
				&= \mathbb{E}\int_\Sigma \left(-\Delta_\Gamma h_\Gamma + s\lambda\varphi h_\Gamma\right)^2 \,  d\sigma\,dt+\mathbb{E}\int_\Sigma \theta^2|\partial_\nu z|^2 \,  d\sigma\,dt+\mathbb{E}\int_\Sigma \theta^2 \partial_\nu z F_2 \,  d\sigma\,dt\\
				&\quad\,+\frac{1}{4}\mathbb{E}\int_\Sigma \theta^2 F_2^2 \, d\sigma\,dt.
			\end{aligned}
		\end{align}
		Notice that
		\begin{align}\label{aineq6}
			\begin{aligned}
				-\frac{1}{4}\lambda\mathbb{E}\int_\Sigma\theta h_\Gamma\left(dz_\Gamma+\Delta_\Gamma z_\Gamma\, dt-\partial_\nu z\,dt\right)\,  d\sigma &\leq  \frac{1}{4}\lambda^2\mathbb{E}\int_\Sigma \theta^2\varphi z_\Gamma^2 \,  d\sigma\,dt+\frac{1}{4}\mathbb{E}\int_\Sigma\theta^2 F_2^2 \,  d\sigma\,dt.
			\end{aligned}
		\end{align}
		Combining \eqref{abacca12}, \eqref{eqqEE21}, \eqref{eqqEE22}, \eqref{eqqEE23}, \eqref{eqqEE24}, \eqref{eqqEE25}, \eqref{eqqEE26}, and taking a large $s_2$, we deduce that
		\begin{align}\label{ESTIMATEE2}
			\begin{aligned}
				&Cs\lambda^2\mathbb{E}\int_\Sigma \theta^2\varphi z_\Gamma^2 \, d\sigma\,dt + C\lambda\mathbb{E}\int_\Sigma \theta^2|\nabla_\Gamma z_\Gamma|^2 \, d\sigma\,dt+Cs\lambda\mathbb{E}\int_\Sigma\theta^2 \varphi 
				\widehat{Z}^2\, d\sigma\,dt\\
				&\leq C\mathbb{E}\int_\Gamma \theta^2(T)|\nabla_\Gamma z_\Gamma(T)|^2 \,  d\sigma+\mathbb{E}\int_\Sigma\theta^2 F_2^2 \, d\sigma\,dt\\
				&\quad+s\lambda\mathbb{E}\int_\Sigma \theta^2 \varphi z_\Gamma \partial_\nu z \,  d\sigma\,dt-\mathbb{E}\int_\Sigma \theta^2 \Delta_\Gamma z_\Gamma \,\partial_\nu z  \,  d\sigma\,dt\\
				&\quad-\frac{1}{4}\lambda\mathbb{E}\int_\Sigma \theta^2 z_\Gamma\partial_\nu z \,  d\sigma\,dt+\mathbb{E}\int_\Sigma \theta^2|\partial_\nu z|^2 \,  d\sigma\,dt+\mathbb{E}\int_\Sigma \theta^2 \partial_\nu z F_2 \,  d\sigma\,dt.
			\end{aligned}
		\end{align}
		From \eqref{ESTIMATEE1} and \eqref{ESTIMATEE2}, we obtain
		\begin{align}\label{estmm340}
			\begin{aligned}
				&\,s\lambda^2\mathbb{E}\int_Q \theta^2\varphi z^2 \,\, dx\,dt + \lambda\mathbb{E}\int_Q \theta^2|\nabla z|^2 \,\, dx\,dt+s\lambda\mathbb{E}\int_Q\theta^2\varphi Z^2\,\, dx\,dt\\
				&+s\lambda^2\mathbb{E}\int_\Sigma \theta^2\varphi z_\Gamma^2 \, d\sigma\,dt + \lambda\mathbb{E}\int_\Sigma \theta^2|\nabla_\Gamma z_\Gamma|^2 \, d\sigma\,dt+s\lambda\mathbb{E}\int_\Sigma\theta^2 \varphi 
				\widehat{Z}^2\, d\sigma\,dt\\
				&\leq C\Bigg[\mathbb{E}\int_G \theta^2(T)|\nabla z(T)|^2 \, dx+\mathbb{E}\int_\Gamma \theta^2(T)|\nabla_\Gamma z_\Gamma(T)|^2 \,  d\sigma\\
				&\qquad\;+\mathbb{E}\int_Q\theta^2 F_1^2 \,\, dx\,dt+\mathbb{E}\int_\Sigma\theta^2 F_2^2 \, d\sigma\,dt\Bigg].
			\end{aligned}
		\end{align}
		From the well-posedness of \eqref{eqqgbc} with \((z_T, z_{\Gamma,T}) = (0,0)\), see for instance \cite{vanNervenWeis12}, we have 
		\begin{align*}
			|(z, z_\Gamma)|_{L^2_\mathcal{F}(\Omega; C([0,T]; \mathbb{H}^1))} \leq C \left( |F_1|_{L^2_\mathcal{F}(0,T; L^2(G))} + |F_2|_{L^2_\mathcal{F}(0,T; L^2(\Gamma))} \right).
		\end{align*}
		Then, it follows that
		\begin{align}\label{3.41estim}
			\begin{aligned}
				&\mathbb{E}\int_G \theta^2(T) |\nabla z(T)|^2 \, dx + \mathbb{E}\int_\Gamma \theta^2(T) |\nabla_\Gamma z_\Gamma(T)|^2 \,  d\sigma \\
				&\leq C \Bigg[ \mathbb{E}\int_Q \theta^2 F_1^2 \, dx \, dt + \mathbb{E}\int_\Sigma \theta^2 F_2^2 \,  d\sigma \, dt \Bigg].
			\end{aligned}
		\end{align}
		Combining \eqref{estmm340} and \eqref{3.41estim}, it is straightforward to deduce the desired Carleman estimate \eqref{carl2}. This completes the proof of Theorem \ref{thmmS32.1}.
	\end{proof}
	\section{Proof of the main results}\label{sec4}  
	In this section, we present the proof of the main result, i.e., Theorem \ref{thmm1.1}. We achieve this by combining our Carleman estimates \eqref{Carlem1} and \eqref{carl2} for the coupled forward-backward stochastic system \eqref{eqq4.7}. The proofs of Corollaries \ref{corol1} and \ref{corol2} are easy consequences of Theorem \ref{thmm1.1}.
	\begin{proof}[Proof of Theorem \ref{thmm1.1}]
		Let us choose a cut-off function \(\eta \in C^\infty([0,T]; [0,1])\) such that \(\eta \equiv 0\) in \([0, t_2]\) and \(\eta \equiv 1\) in \([t_1, T]\), where \(0 < t_2 < t_1 < t_0 < T\). Define 
		\[(\xi, \xi_\Gamma) = \eta(y, y_\Gamma),\quad\textnormal{and}\quad \left(p^i, p^i_\Gamma, P^i, \widehat{P}^i\right) = \eta\left(z^i, z^i_\Gamma, Z^i, \widehat{Z}^i\right),\;i=1,2,\]
		where \(\left((y, y_\Gamma), z^i, z^i_\Gamma, Z^i, \widehat{Z}^i\right)\) is the solution of \eqref{eqq4.7}. Then \(\left((\xi, \xi_\Gamma), p^i, p^i_\Gamma, P^i, \widehat{P}^i\right)\) is the solution of the following forward-backward stochastic system
		\begin{equation}\label{forbacksyst}
			\begin{cases}
				\begin{array}{ll}
					d\xi - \Delta \xi \,dt = \left[a_1\xi-\frac{1}{\beta_1}\chi_{G_1}(x) p^1-\frac{1}{\beta_2}\chi_{G_2}(x) p^2+\eta_t y\right] \,dt + a_2\xi\,dW(t)&\textnormal{in}\,\,Q,\\
					d\xi_\Gamma-\Delta_\Gamma \xi_\Gamma \,dt+\partial_\nu \xi \,dt = [b_1\xi_\Gamma+\eta_t y_\Gamma] \,dt+b_2\xi_\Gamma \,dW(t)&\textnormal{on}\,\,\Sigma,\\
					dp^{i}+\Delta p^{i} \,dt=\big[-a_1 p^{i}-a_2 P^{i}-\alpha_i\eta(y-y_{i,d})\chi_{G_{i,d}}(x)+\eta_t z^i\big] \,dt+P^{i} \,dW(t)&\textnormal{in}\,\,Q, \\ 
					dp^{i}_{\Gamma}+\Delta_\Gamma p^{i}_\Gamma \,dt-\partial_\nu p^i \,dt=\big[-b_1 p^{i}_{\Gamma}-b_2 \widehat{P}^{i}+\eta_t z^i_\Gamma\big] \,dt+\widehat{P}^{i} \,dW(t)&\textnormal{on}\,\,\Sigma, \\ 
					\xi_\Gamma=\xi\vert_\Gamma,\quad p^i_{\Gamma}=p^{i}|_\Gamma &\textnormal{on}\,\,\Sigma,\\
					(\xi, \xi_\Gamma)|_{t=0}=(0,0) &\textnormal{in}\,\, G\times \Gamma, \\
					(p^i, p^{i}_\Gamma)|_{t=T}=(0, 0),\qquad i=1,2,&\textnormal{in}\,\,G \times \Gamma.
				\end{array}
			\end{cases}
		\end{equation}
		Using the Carleman estimate \eqref{Carlem1} for the first forward equation of \eqref{forbacksyst}, and taking  a large enough \( s_1>0\) and \( \lambda_1>0\), we obtain for any \( s\geq s_1\) and \( \lambda\geq \lambda_1\)
		\begin{align*}
			\begin{aligned}
				&\, s\lambda^2 \mathbb{E} \int_Q \theta^2 \varphi \xi^2 \, dx \, dt + \lambda \mathbb{E} \int_Q \theta^2 |\nabla \xi|^2 \, dx \, dt \\
				&+ s\lambda^2 \mathbb{E} \int_\Sigma \theta^2 \varphi \xi_\Gamma^2 \,  d\sigma \, dt + \lambda \mathbb{E} \int_\Sigma \theta^2 |\nabla_\Gamma \xi_\Gamma|^2 \,  d\sigma \, dt \\
				&\leq C \mathbb{E} \left[ s\lambda \varphi(T) \theta^2(T) |\xi(T)|^2_{L^2(G)} + s\lambda \varphi(T) \theta^2(T) |\xi_\Gamma(T)|^2_{L^2(\Gamma)} \right] \\
				&\quad + C \mathbb{E} \int_Q \theta^2 \left| \frac{1}{\beta_1} \chi_{G_1}(x) p^1 + \frac{1}{\beta_2} \chi_{G_2}(x) p^2 - \eta_t y \right|^2 \, dx \, dt \\
				&\quad + C \mathbb{E} \int_\Sigma \theta^2 |\eta_t y_\Gamma|^2 \,  d\sigma \, dt.
			\end{aligned}
		\end{align*}
		It follows that
		\begin{align}\label{estimm142}
			\begin{aligned}
				& s\lambda^2 \mathbb{E} \int_Q \theta^2 \varphi \eta^2 y^2 \, dx \, dt + s\lambda^2 \mathbb{E} \int_\Sigma \theta^2 \varphi \eta^2 y_\Gamma^2 \,  d\sigma \, dt \\
				&\leq C \mathbb{E} \left[ s\lambda \varphi(T) \theta^2(T) |y(T)|^2_{L^2(G)} + s\lambda \varphi(T) \theta^2(T) |y_\Gamma(T)|^2_{L^2(\Gamma)} \right] \\
				&\quad + C \mathbb{E} \int_Q \theta^2 \eta^2 (|z^1|^2 + |z^2|^2) \, dx \, dt \\
				&\quad + C \mathbb{E} \int_Q \theta^2 \eta_t^2 y^2 \, dx \, dt + C \mathbb{E} \int_\Sigma \theta^2 \eta_t^2 y_\Gamma^2 \,  d\sigma \, dt.
			\end{aligned}
		\end{align}
		Applying the Carleman estimate \eqref{carl2} to the second backward equations of \eqref{forbacksyst}, and choosing a large \(s_2\), we get for all $s\geq s_2$
		\begin{align*}
			\begin{aligned}
				&\,s\lambda^2\mathbb{E}\int_Q \theta^2\varphi \left(|p^1|^2+|p^2|^2\right) \,\, dx\,dt + \lambda\mathbb{E}\int_Q \theta^2\left(|\nabla p^1|^2+|\nabla p^2|^2\right)\,\, dx\,dt\\
				&+s\lambda\mathbb{E}\int_Q\theta^2\varphi \left(|P^1|^2+|P^2|^2\right)\,\, dx\,dt+s\lambda^2\mathbb{E}\int_\Sigma \theta^2\varphi \left(|p_\Gamma^1|^2+|p_\Gamma^2|^2\right) \, d\sigma\,dt \\&+ \lambda\mathbb{E}\int_\Sigma \theta^2\left(|\nabla_\Gamma p_\Gamma^1|^2+|\nabla_\Gamma p_\Gamma^2|^2\right)\, d\sigma\,dt+s\lambda\mathbb{E}\int_\Sigma\theta^2\varphi \left(|\widehat{P}^1|^2+|\widehat{P}^2|^2\right)\, d\sigma\,dt\\
				&\leq C\mathbb{E}\int_Q\theta^2 (|\alpha_1\eta(y-y_{1,d})\chi_{G_{1,d}}-\eta_t z^1|^2+|\alpha_2\eta\left(y-y_{2,d})\chi_{G_{2,d}}-\eta_t z^2|^2\right) \,\, dx\,dt\\
				&\quad+C\mathbb{E}\int_\Sigma\theta^2 \left(|\eta_t z^1_\Gamma|^2+|\eta_t z^2_\Gamma|^2\right) \, d\sigma\,dt.
			\end{aligned}
		\end{align*}
		Then, we have that
		\begin{align}\label{estimat243}
			\begin{aligned}
				&\,s\lambda^2\mathbb{E}\int_Q \theta^2\varphi\eta^2\left(|z^1|^2+|z^2|^2\right) \,\, dx\,dt +s\lambda^2\mathbb{E}\int_\Sigma \theta^2\varphi\eta^2\left(|z_\Gamma^1|^2+|z_\Gamma^2|^2\right) \, d\sigma\,dt\\
				&\leq C\mathbb{E}\int_Q\theta^2 \eta^2 y^2\,\, dx\,dt+C\mathbb{E}\int_Q\theta^2 \eta^2 |y_{1,d}|^2\chi_{G_{1,d}} \,\,\, dx\,dt+C\mathbb{E}\int_Q\theta^2 \eta^2 |y_{2,d}|^2\chi_{G_{2,d}} \,\,\, dx\,dt\\
				&\quad+C\mathbb{E}\int_Q\theta^2 \eta_t^2 \left(|z^1|^2+|z^2|^2\right)\,\, dx\,dt+C\mathbb{E}\int_\Sigma\theta^2 \eta_t^2 \left(|z^1_\Gamma|^2+|z^2_\Gamma|^2\right) \, d\sigma\,dt.
			\end{aligned}
		\end{align}
		Adding \eqref{estimm142} and \eqref{estimat243} and taking a large $s\geq s_3=\max(s_1,s_2)$, we obtain that
		\begin{align*}
			\begin{aligned}
				&\,s\lambda^2\mathbb{E}\int_Q \theta^2\varphi \eta^2 \left(y^2+|z^1|^2+|z^2|^2\right) \,\, dx\,dt+s\lambda^2\mathbb{E}\int_\Sigma \theta^2\varphi \eta^2 y_\Gamma^2 \, d\sigma\,dt\\
				&\leq C\mathbb{E}\left[s\lambda\varphi(T)\theta^2(T) |y(T)|^2_{L^2(G)}+s\lambda\varphi(T)\theta^2(T)|y_\Gamma(T)|^2_{L^2(\Gamma)}\right]\\
				&\quad\,+C\theta^2(T)\mathbb{E}\int_Q \left(|y_{1,d}|^2\chi_{G_{1,d}}+|y_{2,d}|^2\chi_{G_{2,d}}\right)\,\, dx\,dt+C\mathbb{E}\int_Q\theta^2 \eta_t^2 \left(y^2+|z^1|^2+|z^2|^2\right)\,\, dx\,dt\\
				&\quad+C\mathbb{E}\int_\Sigma\theta^2 \eta_t^2 \left(y_\Gamma^2+|z^1_\Gamma|^2+|z^2_\Gamma|^2\right) \, d\sigma\,dt.
			\end{aligned}
		\end{align*}
		It follows that
		\begin{align}\label{ineqq44}
			\begin{aligned}
				&\,s\lambda^2\mathbb{E}\int_Q \theta^2\varphi \eta^2 \left(y^2+|z^1|^2+|z^2|^2\right)  \,\, dx\,dt+s\lambda^2\mathbb{E}\int_\Sigma \theta^2\varphi \eta^2 y_\Gamma^2 \, d\sigma\,dt\\
				&\leq C\varphi(T)\theta^2(T)\left[\mathbb{E}\int_G s\lambda y^2(T) \, dx+\mathbb{E}\int_\Gamma s\lambda y^2_\Gamma(T) \,  d\sigma+M_1\right]\\
				&\quad\,+C\mathbb{E}\int_Q\theta^2 \eta_t^2 \left(y^2+|z^1|^2+|z^2|^2\right)\,\, dx\,dt+C\mathbb{E}\int_\Sigma\theta^2 \eta_t^2 \left(y_\Gamma^2+|z^1_\Gamma|^2+|z^2_\Gamma|^2\right) \, d\sigma\,dt,
			\end{aligned}
		\end{align}
		where $M_1=\mathbb{E}\displaystyle\int_Q \left(|y_{1,d}|^2\chi_{G_{1,d}}+|y_{2,d}|^2\chi_{G_{2,d}}\right)\,\, dx\,dt$. \\
		On the other hand, we have that
		\begin{align*}
			\begin{aligned}
				&\,\mathbb{E}\int_Q\theta^2 \eta_t^2 \left(y^2+|z^1|^2+|z^2|^2\right)\,\, dx\,dt+\mathbb{E}\int_\Sigma\theta^2 \eta_t^2 \left(y_\Gamma^2+|z^1_\Gamma|^2+|z^2_\Gamma|^2\right) \, d\sigma\,dt\\
				&\leq C\mathbb{E}\int_{t_2}^{t_1}\int_G\theta^2  \left(y^2+|z^1|^2+|z^2|^2\right)\,\, dx\,dt+\mathbb{E}\int_{t_2}^{t_1}\int_\Gamma\theta^2  \left(y_\Gamma^2+|z^1_\Gamma|^2+|z^2_\Gamma|^2\right) \, d\sigma\,dt,
			\end{aligned}
		\end{align*}
		which implies that
		\begin{align}\label{ine445}
			\begin{aligned}
				&\,\mathbb{E}\int_Q\theta^2 \eta_t^2 \left(y^2+|z^1|^2+|z^2|^2\right)\,\, dx\,dt+\mathbb{E}\int_\Sigma\theta^2 \eta_t^2 \left(y_\Gamma^2+|z^1_\Gamma|^2+|z^2_\Gamma|^2\right) \, d\sigma\,dt\\
				&\leq C\theta^2(t_1)\left[\mathbb{E}\int_Q   \left(y^2+|z^1|^2+|z^2|^2\right)\,\, dx\,dt+\mathbb{E}\int_\Sigma   \left(y_\Gamma^2+|z^1_\Gamma|^2+|z^2_\Gamma|^2\right) \, d\sigma\,dt\right].
			\end{aligned}
		\end{align}
		From \eqref{ineqq44} and \eqref{ine445}, it is easy to see that
		\begin{align}\label{ineqq4477}
			\begin{aligned}
				&\,s\lambda^2\mathbb{E}\int_{t_0}^T\int_G  \left(y^2+|z^1|^2+|z^2|^2\right)  \,\, dx\,dt+s\lambda^2\mathbb{E}\int_{t_0}^T\int_\Gamma  y_\Gamma^2 \, d\sigma\,dt\\
				&\leq C\varphi(T)\theta^2(T)\left[\mathbb{E}\int_G s\lambda y^2(T) \, dx+\mathbb{E}\int_\Gamma s\lambda y^2_\Gamma(T) \,  d\sigma+M_1\right]\\
				&\quad+C\theta^{-2}(t_0)\theta^2(t_1)\left[\mathbb{E}\int_Q   \left(y^2+|z^1|^2+|z^2|^2\right)\,\, dx\,dt+\mathbb{E}\int_\Sigma   \left(y_\Gamma^2+|z^1_\Gamma|^2+|z^2_\Gamma|^2\right) \, d\sigma\,dt\right].
			\end{aligned}
		\end{align}
		By Itô's formula, we obtain 
		\begin{align*}
			\begin{aligned}
				&-\mathbb{E}\int_{t_0}^T\int_G dy^2\,\, dx-\mathbb{E}\int_{t_0}^T\int_\Gamma dy_\Gamma^2\, d\sigma\\
				&\leq C\left[\mathbb{E}\int_{t_0}^T\int_G  \left(y^2+|z^1|^2+|z^2|^2\right)  \,\, dx\,dt+\mathbb{E}\int_{t_0}^T\int_\Gamma  y_\Gamma^2 \, d\sigma\,dt\right]. 
			\end{aligned}
		\end{align*}
		Hence,
		\begin{align}\label{esmtt466}
			\begin{aligned}
				\mathbb{E}\int_G y^2(t_0,x) \, dx+\mathbb{E}\int_\Gamma y_\Gamma^2(t_0,x) \,  d\sigma&\leq \mathbb{E}\int_G y^2(T,x) \, dx+\mathbb{E}\int_\Gamma y_\Gamma^2(T,x) \,  d\sigma\\
				&\quad+C\left[\mathbb{E}\int_{t_0}^T\int_G  \left(y^2+|z^1|^2+|z^2|^2\right)  \,\, dx\,dt+\mathbb{E}\int_{t_0}^T\int_\Gamma  y_\Gamma^2 \, d\sigma\,dt\right].
			\end{aligned}
		\end{align}
		Combining \eqref{ineqq4477} and \eqref{esmtt466} and fixing the parameter $\lambda=\lambda_1$, we get that
		\begin{align*}
			\begin{aligned}
				&\,\mathbb{E}\int_G y^2(t_0,x) \, dx+\mathbb{E}\int_\Gamma y_\Gamma^2(t_0,x) \,  d\sigma\\
				&\leq C\theta^2(T)\left[\mathbb{E}\int_G y^2(T,x) \, dx+\mathbb{E}\int_\Gamma y^2_\Gamma(T,x) \,  d\sigma+M_1\right]\\
				&\quad+C\theta^{-2}(t_0)\theta^2(t_1)\left[\mathbb{E}\int_Q   \left(y^2+|z^1|^2+|z^2|^2\right)\,\, dx\,dt+\mathbb{E}\int_\Sigma   \left(y_\Gamma^2+|z^1_\Gamma|^2+|z^2_\Gamma|^2\right) \, d\sigma\,dt\right].
			\end{aligned}
		\end{align*}
		Then, we deduce that
		\begin{align}\label{estmmf}
			\begin{aligned}
				&\,\mathbb{E}\int_G y^2(t_0,x) \, dx+\mathbb{E}\int_\Gamma y_\Gamma^2(t_0,x) \,  d\sigma\\
				&\leq Ce^{2se^{\lambda_1 T}}\left[\mathbb{E}\int_G y^2(T,x) \, dx+\mathbb{E}\int_\Gamma y^2_\Gamma(T,x) \,  d\sigma+M_1\right]\\
				&\quad+Ce^{-2s(e^{\lambda_1 t_0}-e^{\lambda_1 t_1})}\left[\mathbb{E}\int_Q   \left(y^2+|z^1|^2+|z^2|^2\right)\,\, dx\,dt+\mathbb{E}\int_\Sigma   \left(y_\Gamma^2+|z^1_\Gamma|^2+|z^2_\Gamma|^2\right) \, d\sigma\,dt\right],
			\end{aligned}
		\end{align}
		for all $s\geq s_3$. Replacing $C$ by $Ce^{Cs_3}$, then \eqref{estmmf} holds for all $s>0$. Choosing $s\geq0$ minimizing the right-hand side of \eqref{estmmf}, we end up with
		\begin{align}
			\begin{aligned}
				&\,\mathbb{E}\int_G y^2(t_0,x)\, dx+  \mathbb{E}\int_\Gamma y_\Gamma^2(t_0,x)\,  d\sigma \\
				&\leq C\left(\mathbb{E}\int_Q y^2 \,\, dx\,dt+\mathbb{E}\int_\Sigma   y_\Gamma^2 \, d\sigma\,dt+M_0\right)^{1-\kappa}\left(\mathbb{E}\int_G y^2(T,x)\, dx+  \mathbb{E}\int_\Gamma y_\Gamma^2(T,x)\,  d\sigma +M_1\right)^{\kappa},
			\end{aligned}
		\end{align}
		where 
		$$M_0=\mathbb{E}\int_Q \left(|z^1|^2+|z^2|^2\right)\,\, dx\,dt+\mathbb{E}\int_\Sigma   \left(|z^1_\Gamma|^2+|z^2_\Gamma|^2\right) \, d\sigma\,dt,$$
		and
		$$\kappa=\frac{2(e^{\lambda_1 t_0}-e^{\lambda_1 t_1})}{C+2(e^{\lambda_1 t_0}-e^{\lambda_1 t_1})}.$$
		This concludes the proof of Theorem \ref{thmm1.1}.
	\end{proof}
	
	\section{Conclusion}
	In this work, we study a multi-objective inverse initial problem with a Nash strategy for stochastic reaction-diffusion equations with dynamic boundary conditions. By first characterizing the Nash equilibrium for some given functionals \(J_1\) and \(J_2\), we reduce the multi-objective inverse initial problem to a classical inverse initial problem related to the coupled forward-backward stochastic system \eqref{eqq4.7}. We then establish the interpolation inequality \eqref{interine2.3}, from which we derive two main results: backward uniqueness and a conditional stability estimate for the solutions of \eqref{eqq1.1}. The proofs are based on new Carleman estimates for both forward and backward stochastic parabolic equations with dynamic boundary conditions, employing a weighted identity method with the simple weight function \(\varphi = e^{\lambda t}\) (where \(\varphi\) is \(x\)-independent), and \(\lambda > 0\) is a suitably large parameter.

\end{document}